\theoremstyle{plain}
\newtheorem{proposition}{Proposition}[section]
\newtheorem{theorem}[proposition]{Theorem} 
\newtheorem{lemma}[proposition]{Lemma}
\newtheorem{conjecture}[proposition]{Conjecture} 
\newtheorem{corollary}[proposition]{Corollary}
\theoremstyle{definition}
\newtheorem{remark}[proposition]{Remark}
\newtheorem{ex}[proposition]{Example}
\newtheorem{conv}[proposition]{Convention}
\begin{document}

\title{On the generalized Davenport constant and the Noether number}   
\author{K\'alm\'an Cziszter $^a$ 
\thanks{The paper is based on results from the PhD thesis of the first author written at the Central European University.}
\qquad and \qquad M\'aty\'as Domokos $^b$ 
\thanks{The second author is partially supported by OTKA  NK81203 and K101515.}
}
\date{}
\maketitle 
{\small \begin{center} 
$^a$ Central European University, Department of Mathematics and its Applications, 
N\'ador u. 9, 1051 Budapest, Hungary \\
Email: cziszter\_kalman-sandor@ceu-budapest.edu \\
 $^b$ R\'enyi Institute of Mathematics, Hungarian Academy of Sciences,\\
 Re\'altanoda u. 13-15, 1053 Budapest, Hungary \\
Email: domokos.matyas@renyi.mta.hu
\end{center}
}

\maketitle

\begin{abstract} Known results on the generalized Davenport constant relating zero-sum sequences over a finite abelian group are extended for the generalized Noether number relating rings of polynomial invariants of an arbitrary finite group. 
An improved general upper degree bound for polynomial invariants of a non-cyclic finite group that cut out the zero vector is given.  
\noindent 2010 MSC: 13A50 (Primary) 11B75, 13A02 (Secondary)

\noindent Keywords: Noether number, Davenport constant, polynomial invariants
\end{abstract}

\section{Introduction}  

The \emph{Davenport constant} ${\mathsf{D}}(A)$ of a finite abelian group $A$ is defined as the smallest positive integer $n$ such that every sequence over $A$ of length at least $n$ has a non-empty zero-sum subsequence.  The Davenport constant naturally appears in the theory of polynomial invariants of finite groups. 
The \emph{Noether number} $\beta(G)$ of a finite group $G$ is the maximal possible degree of  an indecomposable polynomial invariant of $G$ (the definition involves a fixed base field ${\mathbb{F}}$ which we always assume to have characteristic not dividing the order of $G$). It was shown by B. Schmid  \cite{schmid} that when $G=A$ is abelian, then 
$\beta(G)={\mathsf{D}}(A)$. 

Halter-Koch \cite{halter-koch} introduced for any positive integer $k$ the 
\emph{generalized Davenport constant} ${\mathsf{D}}_k(A)$ as the smallest positive integer $n$ such that 
every sequence over $A$ of length at least $n$ is divisible by the product of $k$ non-empty zero-sum subsequences (cf. Chapter 6.1 in \cite{geroldinger-halterkoch}; 
by the \emph{product} of sequences over $A$ we mean their concatenation, and \emph{divisibility} of sequences is defined accordingly). 
This notion also can be seen as  the abelian special case of a concept of invariant theory. 
In \cite{cziszter-domokos:1} the authors introduced the \emph{generalized Noether number} $\beta_k(G)$ for an arbitrary finite group $G$ and  positive integer $k$ (the definition again involves a fixed base field ${\mathbb{F}}$ of characteristic not dividing  $|G|$, suppressed from the notation).  
When $G=A$ is abelian, then $\beta_k(A)={\mathsf{D}}_k(A)$. As it is demonstrated in \cite{cziszter-domokos:1}, the main use of the generalized Noether number is that it can be efficiently used to derive upper bounds for the ordinary Noether number of a group in terms of the generalized Noether numbers of its subquotients (see the beginning of Section~\ref{sec:growth} for some details).  

In the first part of the present  note we develop further the analogy between the generalized Davenport constant and the generalized Noether number. 
Theorem~\ref{thm:beta GxH} gives a lower bound for the generalized Noether number of a direct product of groups in terms of the generalized Noether numbers of the factors, 
generalizing thereby  Lemma 6.1.4 from \cite{geroldinger-halterkoch} for non-abelian groups. 
Next we investigate the behaviour of $\beta_k(G)$ as a function of $k$. 
The fact that $D_k(A)$ is an almost linear function of $k$ was shown by M. Freeze and W. A. Schmid \cite{freeze-schmid}, building on a result of Delorme, Ordaz and Quiroz \cite{delorme}; see also Theorem 6.1.5 in \cite{geroldinger-halterkoch}. 
This is generalized in  Corollary~\ref{cor:growthbetag}, which we derive  from basic generalities on graded rings and the method of polarization in invariant theory. It states  that for a fixed finite group $G$ and a base field ${\mathbb{F}}$ of characteristic zero there exists a positive integer $k_0$ and a non-negative integer $\beta_0(G)$ such that for all $k\ge k_0$ we have $\beta_k(G)=k\sigma(G)+\beta_0(G)$;  the number $\sigma(G)$ is another well known quantity of invariant theory: it is the minimal positive integer $n$ such that for any representation of $G$ there exist homogeneous $G$-invariant polynomial functions of degree at most $n$ such that their only common zero is the origin. 
(In the special case when $G=A$ is abelian, we have $\sigma(A)=\mathrm{exp}(A)$, the exponent of $A$. We note also that for linearly reductive algebraic groups a related quantity $\sigma(G,V)$ plays a significant role in constructive invariant theory, see \cite{popov} or \cite{derksen}.) 

In Section~\ref{sec:sigmabasic} we  establish some basic properties of $\sigma(G)$ for $G$ finite. They are used to prove Theorem~\ref{thm:mainsigma}, stating  that 
if $G$ is non-cylic, then $\sigma(G)\le |G|/q$ for the minimal prime divisor of $|G|$, provided that ${\mathrm{char}}({\mathbb{F}})$ does not divide $|G|$. 
This is 
an easier but  stronger variant for $\sigma(G)$ of the main combined result on $\beta(G)$ proved in 
\cite{cziszter-domokos:1} and \cite{cziszter-domokos:2} (continuing the investigations of \cite{schmid}, \cite{domokos-hegedus}, \cite{sezer}). 


\section{Preliminaries}\label{sec:prel} 

We need to recall some generalities on graded modules. 
By a graded module here we mean an $\mathbb{N}$-graded module $M=\bigoplus_{d=0}^\infty M_d$ over a 
commutative graded ${\mathbb{F}}$-algebra $R=\bigoplus_{d=0}^\infty R_d$ such that $R_0= {\mathbb{F}}$ is a field when $R$ is unital 
and $R_0 = \{0\}$ otherwise; in the latter case still we assume that $M$ is an  ${\mathbb{F}}$-vector space, the multiplication map is ${\mathbb{F}}$-bilinear, and a submodule or an ideal by definition is assumed to be a subspace. 
We set 
$M_{\le s} := \bigoplus_{d=0}^s M_d$, and  
$R_+:=\bigoplus_{d>0}R_d$. We write $R_+^l$ for the $l$th power of the ideal $R_+$, and more generally, for subsets $A,B\subset R$, 
$AB$ stands for the $F$-subspace spanned by $\{ab\mid a\in A, b\in B\}$, and $\langle A\rangle$ denotes the ideal in $R$ generated by $A$. 
The subalgebra of $R$ generated by $R_{\le s}:=\bigoplus_{d=0}^sR_d$ will be denoted by ${\mathbb{F}}[R_{\le s}]$. 

The factor space $M/R_+M$ inherits the grading. 
Define 
\[\beta(M,R):=\mbox{ the top degree of }M/R_+M\] 
provided that $M/R_+M$ has finitely many non-zero homogeneous components, and write $\beta(M,R) = \infty$ otherwise. 
By the graded Nakayama Lemma $\beta(M,R)$ is the minimal non-negative integer $s$ such that 
$M=M_{\le s}+M_{\le s}R$ (i.e. $M$ is generated as an $R$-module by $M_{\le s}$), when $M$ is generated in bounded degree. 

In particular, the maximal degree of a homogeneous element of $R_+$
not belonging to $R_+^2$ is 
\[\beta(R) := \beta(R_+, R),\]  
the minimal positive integer $n$ such that $R$ is generated as an ${\mathbb{F}}$-algebra by homogeneous elements of degree at most $n$. 

Let $M$ be a graded $R$-module.  We define
for any integer $k \ge 1$
\[ \beta_k(M, R) : = \beta(M, R_+^k)\]
The abbreviation 
\[\beta_k(R):= \beta_k(R_+,R)\] 
will also be used. 
In the special case $k=1$ we recover $\beta_1(M,R)=\beta(M,R)$ and $\beta_1(R)=\beta(R)$. 
Note the trivial inequality $\beta_k(R)\le k\beta(R)$. 

The graded modules and algebras we are interested in come from invariant theory. 
We fix a base field ${\mathbb{F}}$, and let $G$ be a finite group whose order is invertible in  ${\mathbb{F}}$. 
Take a $G$-module $V$, i.e. a finite dimensional ${\mathbb{F}}$-vector space endowed with a representation of $G$ on $V$. 
The coordinate ring ${\mathbb{F}}[V]$ is defined 
in abstract terms as the symmetric tensor algebra of the dual space $V^*$. 
So ${\mathbb{F}}[V]$ is  a polynomial ring in $\dim(V)$ variables,
hence in particular it is a graded ring with  ${\mathbb{F}}[V]_1 = V^*$. 
The left action of $G$ on $V$ induces  a natural right action on $V^*$ given by  
$x^g(v) = x(gv)$ for any $g\in G, v\in V$ and $x\in V^*$. 
This right action of $G$ on $V^*$  extends to the symmetric tensor algebra ${\mathbb{F}}[V]$. 
The corresponding  \emph{ring of polynomial invariants} is 
\[{\mathbb{F}}[V]^G := \{ f \in {\mathbb{F}}[V]: f^g = f \quad \forall g \in G \}\]
and 
\[\beta(G,V) := \beta({\mathbb{F}}[V]^G)\] 
is called the \emph{Noether number} of the $G$-module $V$. 
We also set  
\[\beta_k(G,V):= \beta_k({\mathbb{F}}[V]^G)\] 
for an arbitrary positive integer $k$,   
and 
\[\beta_k(G) := \sup\{\beta_k(G,V)\mid V\text{ is a finite dimensional }G\text{-module over }{\mathbb{F}}\}.\] 
We shall refer to these numbers as the {\it generalized Noether numbers} of the group $G$. 

The famous theorem of E. Noether asserts that $\beta(G):=\beta_1(G)$ is bounded by the order of $G$. 
When ${\mathrm{char}}({\mathbb{F}})=0$ or ${\mathrm{char}}({\mathbb{F}})>|G|$, this was proved in \cite{noether:1916}. 
The result was extended to non-modular positive characteristic independently by Fleischmann \cite{fleischmann} and Fogarty \cite{fogarty}.

\begin{conv}\label{conv}
Throughout this paper  ${\mathbb{F}}$ is our base field and
 $G$ (or $H$) is a finite group of order not divisible by  ${\mathrm{char}} ({\mathbb{F}})$, unless explicitly stated otherwise. 
By a  $G$-module we mean a finite dimensional ${\mathbb{F}}$-vector space endowed with a linear action of $G$.  
\end{conv}


\section{Lower bound for direct products}

\begin{lemma}\label{lemma:beta-beta*} 
For any $G$-module $V$ there exists an irreducible $G$-module $U$ such that 
\[\beta_k (G,V\oplus U)\geq \beta_k({\mathbb{F}}[V],{\mathbb{F}}[V]^G)+1.\]
\end{lemma}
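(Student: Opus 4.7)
Let $d := \beta_k({\mathbb{F}}[V], {\mathbb{F}}[V]^G)$, which by definition is the top degree of the graded $G$-module ${\mathbb{F}}[V] / ({\mathbb{F}}[V]^G_+)^k\,{\mathbb{F}}[V]$. My plan is to pick $U$ to be an irreducible $G$-submodule of the top graded component of this quotient, to use the standard polarization trick to produce an invariant of ${\mathbb{F}}[V \oplus U]$ of degree $d+1$, and to verify that this invariant represents a non-trivial generator modulo $({\mathbb{F}}[V \oplus U]^G_+)^{k+1}$.

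Concretely, since ${\mathrm{char}}({\mathbb{F}}) \nmid |G|$, Maschke's theorem makes ${\mathbb{F}}[V]_d$ a semisimple $G$-module, so I can choose an irreducible $G$-module $U$ together with an equivariant embedding $\phi : U \hookrightarrow {\mathbb{F}}[V]_d$ whose image is not contained in $({\mathbb{F}}[V]^G_+)^k\,{\mathbb{F}}[V]$; such $U$ exists because the top graded component of ${\mathbb{F}}[V] / ({\mathbb{F}}[V]^G_+)^k\,{\mathbb{F}}[V]$ is non-zero. Fixing dual bases $x_1, \dots, x_m$ of $U$ and $y_1, \dots, y_m$ of $U^* \subseteq {\mathbb{F}}[U]_1$, and setting $f_i := \phi(x_i)$, I form
\[h := \sum_{i=1}^m f_i\, y_i \; \in \; ({\mathbb{F}}[V]_d \otimes U^*)^G \; \subseteq \; {\mathbb{F}}[V \oplus U]^G_{d+1}.\]
The $G$-invariance of $h$ is immediate from the canonical identification $({\mathbb{F}}[V]_d \otimes U^*)^G = \mathrm{Hom}_G(U, {\mathbb{F}}[V]_d)$ under which $h$ corresponds to $\phi$ itself.

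The main task is proving that $h \notin ({\mathbb{F}}[V \oplus U]^G_+)^{k+1}$. Assuming the contrary, I would write $h$ as a sum of products of $k+1$ homogeneous positive-degree invariants and exploit the natural bigrading on ${\mathbb{F}}[V \oplus U] = {\mathbb{F}}[V] \otimes {\mathbb{F}}[U]$ to refine each factor into bihomogeneous summands. Since $h$ has $U$-degree exactly $1$, the only summands contributing to bidegree $(d, 1)$ are those in which precisely one factor has $U$-degree $1$ and the remaining $k$ factors lie in ${\mathbb{F}}[V]^G_+$; it follows that $h$ lies in $({\mathbb{F}}[V]^G_+)^k \cdot ({\mathbb{F}}[V] \otimes U^*)$. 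Comparing coefficients of $y_1, \dots, y_m$ on both sides of the resulting identity then forces each $f_i \in ({\mathbb{F}}[V]^G_+)^k\,{\mathbb{F}}[V]$, contradicting the choice of $\phi$.

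The delicate point is the bihomogeneous bookkeeping: verifying that after extracting the single $U$-degree-$1$ factor from each surviving summand there genuinely remain $k$ factors, all in ${\mathbb{F}}[V]^G_+$. This is a direct consequence of bihomogeneity and the fact that $U$-degrees are non-negative integers summing to $1$, but it is the only step where one actually uses that our decomposition involves exactly $k+1$ and not fewer factors.
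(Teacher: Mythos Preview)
Your proof is correct and essentially identical to the paper's: both choose an irreducible $U \hookrightarrow {\mathbb{F}}[V]_d$ whose image is not contained in $R_+^k\,{\mathbb{F}}[V]$, form the canonical invariant $\sum_i f_i y_i \in {\mathbb{F}}[V\oplus U]^G_{d+1}$, and use the bigrading on ${\mathbb{F}}[V\oplus U]$ together with coefficient comparison in the $y_i$ to obtain a contradiction. The paper handles the bigrading step more tersely (writing $f=\sum_j g_j h_j$ with $g_j\in R_+^k$ and $h_j$ linear in $U$), while your identification of $h$ with $\phi$ under $({\mathbb{F}}[V]_d\otimes U^*)^G \cong \mathrm{Hom}_G(U,{\mathbb{F}}[V]_d)$ is a clean way to see the invariance; otherwise the arguments coincide.
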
 

\begin{proof} Write $L:= {\mathbb{F}}[V]$, $ R:= {\mathbb{F}}[V]^G$ and set $d:=\beta_k(L,R)$. 
By complete reducibility of the $G$-module $L_d$ its submodule $R_+^kL \cap L_d$ has a  direct complement,
which is non-zero by the definition of $d$, hence it contains an irreducible $G$-submodule $U$. 
Choose a basis $e_1,\dots,e_n$ in $U$ and let $\varepsilon_1,\dots,\varepsilon_n$ be the corresponding dual basis in $U^*$. 
The matrix of $g$ acting on $U^*$ via the contragredient representation is the transpose of the inverse of the matrix of $g$ acting on $U$. 
Thus $f:=\sum_{i=1}^ne_i\varepsilon_i$, viewed as an element  in the polynomial ring ${\mathbb{F}}[V\oplus U]= {\mathbb{F}}[V] \otimes {\mathbb{F}}[\varepsilon_1,\dots,\varepsilon_n]$, 
is a $G$-invariant  of degree $d+1$.     
We claim that $f \not\in S_+^{k+1}$ where $S := {\mathbb{F}}[V \oplus U]^G$. 
Note that the action of $G$ on ${\mathbb{F}}[V\oplus U]$ preserves the   total degree both in the variables belonging to $V^*$ and to $U^*$. 
Suppose indirectly that $f\in S_+^{k+1}$. 
Then $f=\sum_j g_jh_j$ where $g_j \in R_+^k$ while $h_j\in S_+$ is linear on $U$, 
i.e. $h_j =\sum_{i=1}^n h_{j, i}\varepsilon_i$ for some polynomials $h_{j,i}\in L$. 
After equating the coefficients of $\varepsilon_i$ on both sides we get that 
$e_i = \sum_j g_jh_{j,i} \in R_+^kL$, contradicting the choice of $U$. 
\end{proof} 

 \begin{corollary}\label{cor:beta-beta*}
If $V$ is a $G$-module such that $\beta_k(G,V)=\beta_k(G)$ then 
\[\beta_k(G,V)=\beta_k({\mathbb{F}}[V],{\mathbb{F}}[V]^G)+1.\]
\end{corollary}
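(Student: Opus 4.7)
The plan is to bound $\beta_k(G,V)$ on both sides by $\beta_k({\mathbb{F}}[V],{\mathbb{F}}[V]^G)+1$, using Lemma~\ref{lemma:beta-beta*} for one direction and a Reynolds-operator argument for the other.

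For the lower bound, I would invoke Lemma~\ref{lemma:beta-beta*} to produce an irreducible $G$-module $U$ with
\[\beta_k(G,V\oplus U)\geq \beta_k({\mathbb{F}}[V],{\mathbb{F}}[V]^G)+1.\]
The hypothesis that $V$ realises the supremum $\beta_k(G)$ immediately gives $\beta_k(G,V)\geq \beta_k(G,V\oplus U)$, whence the desired inequality.

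For the upper bound, I would prove the hypothesis-free general fact $\beta_k({\mathbb{F}}[V]^G)\leq \beta_k({\mathbb{F}}[V],{\mathbb{F}}[V]^G)+1$. Writing $L={\mathbb{F}}[V]$ and $R={\mathbb{F}}[V]^G$, take a homogeneous $f\in R_+$ of degree $d\geq \beta_k(L,R)+2$ and expand $f=\sum_i x_i\ell_i$ with $x_i\in V^*$ and $\ell_i\in L_{d-1}$. Since $\deg\ell_i=d-1>\beta_k(L,R)$, each $\ell_i$ lies in $R_+^kL$ and admits a presentation $\ell_i=\sum_j r_{ij}m_{ij}$ with $r_{ij}\in R_+^k$ and $m_{ij}\in L$ homogeneous. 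Substituting and then applying the Reynolds operator $\rho\colon L\to R$, which is $R$-linear and restricts to the identity on $R$, gives
\[f=\rho(f)=\sum_{i,j}r_{ij}\,\rho(x_im_{ij}).\]
Each $\rho(x_im_{ij})$ is a homogeneous invariant of positive degree and so lies in $R_+$; hence $f\in R_+^k\cdot R_+=R_+^{k+1}$. This shows $\beta_k(R)\leq \beta_k(L,R)+1$, which combined with the lower bound yields the claimed equality.

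The only delicate point I anticipate is the upper bound: the factorisation of $\ell_i$ supplied by the definition of $\beta_k(L,R)$ lives in $L$, so naively multiplying by the linear form $x_i$ produces a relation only inside $L$. The $R$-linearity (but not multiplicativity) of $\rho$ is what allows one to transfer the new factors $x_im_{ij}$ back into $R_+$ while leaving the $R_+^k$-coefficients $r_{ij}$ untouched, which is exactly what lands $f$ in $R_+^{k+1}$.
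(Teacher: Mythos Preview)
Your proof is correct and follows essentially the same approach as the paper's. Both directions match: the lower bound via Lemma~\ref{lemma:beta-beta*} combined with the maximality hypothesis, and the upper bound via the $R$-linearity of the averaging map (you use the Reynolds operator $\rho$ with $\rho|_R=\mathrm{id}$, the paper uses the transfer map $\tau$ with $\tau(L)=R$, which amounts to the same thing up to the scalar $|G|$).
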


\begin{proof}
For any $G$-module $V$ it holds that $\beta_k(G,V)\leq \beta_k(L,R)+1$ where $L ={\mathbb{F}}[V]$ and $R = {\mathbb{F}}[V]^G$. 
Indeed, if $f \in L$ has degree $\deg(f) > \beta_k(L,R)+1$ then  $f\in L_+\cdot(\bigoplus_{d>\beta_k(L,R)}L_d) \subseteq L_+R_+^k$. 
Apply the transfer map $\tau:L \to R$, $f\mapsto \sum_{g\in G}f^g$. It is a graded $R$-module epimorphism from $L$ onto $R$ (see for example Chapter 1 in \cite{benson}).  It follows that $\tau(f) \in R_+^{k+1}$, and on the other hand $\tau(L)=R$, hence the desired inequality follows. 
The reverse inequality  is an immediate consequence of Lemma~\ref{lemma:beta-beta*}. 
\end{proof}

\begin{remark} Compare Corollary~\ref{cor:beta-beta*} with the formula ${\mathsf{D}}_k(A)=\mathsf{d}_k(A)+1$ in Lemma 6.1.2 of  
\cite{geroldinger-halterkoch}, where $\mathsf{d}_k(A)$ is the maximal length of a sequence over $A$ which is not divisible by the product of $k$ zero-sum subsequences. 
It is not difficult to show that $\mathsf{d}_k(A)=\sup_V\beta_k({\mathbb{F}}[V],{\mathbb{F}}[V]^A)$ where $V$ ranges over all $A$-modules, whereas 
$\beta_k(A)={\mathsf{D}}_k(A)$, as we mentioned in the introduction. 
\end{remark}

\begin{theorem}\label{thm:beta GxH}
For any integers $r,s \ge 1$ and finite groups $G,H$  we have the inequality 
\[\beta_{r+s-1}(G\times H) \geq \beta_r(G) + \beta_s(H)-1.\] 
\end{theorem}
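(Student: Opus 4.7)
My plan is to first reduce the problem to a statement about the module $L:=\mathbb{F}[V\oplus W]$ over the invariant ring $R:=L^{G\times H}$ for suitable $V,W$, and then gain the final unit of degree via Lemma~\ref{lemma:beta-beta*}. Choose a $G$-module $V$ with $\beta_r(G,V)=\beta_r(G)$ and an $H$-module $W$ with $\beta_s(H,W)=\beta_s(H)$, and write $L_i,R_i$ accordingly, so $L=L_1\otimes L_2$ and $R=R_1\otimes R_2$. Corollary~\ref{cor:beta-beta*} gives $\beta_r(L_1,R_1)=\beta_r(G)-1$ and $\beta_s(L_2,R_2)=\beta_s(H)-1$, so there are homogeneous $\eta_1\in L_1$ of degree $\beta_r(G)-1$ outside $(R_1)_+^{r}L_1$ and $\eta_2\in L_2$ of degree $\beta_s(H)-1$ outside $(R_2)_+^{s}L_2$.

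The core claim is $\eta_1\eta_2\notin R_+^{r+s-1}L$. Starting from
\[(R_1\otimes R_2)_+^{n}=\sum_{k=0}^{n}(R_1)_+^{k}\otimes (R_2)_+^{n-k}\qquad\big((R_i)_+^{0}:=R_i\big),\]
multiplication by $L_1\otimes L_2$ expands $R_+^{n}L$ as $\sum_{k=0}^{n}(R_1)_+^{k}L_1\otimes(R_2)_+^{n-k}L_2$. Suppose for contradiction that $\eta_1\otimes\eta_2=\sum_{k=0}^{r+s-1}x_k\otimes y_k$ with $x_k\in(R_1)_+^{k}L_1$ and $y_k\in(R_2)_+^{r+s-1-k}L_2$; passing to the bihomogeneous component of bidegree $(\beta_r(G)-1,\beta_s(H)-1)$ preserves the shape. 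Pick a linear functional $\psi\colon L_1\to\mathbb{F}$ with $\psi(\eta_1)=1$ vanishing on $\bigl((R_1)_+^{r}L_1\bigr)_{\beta_r(G)-1}$; this is possible precisely because $\eta_1$ lies outside that subspace. Applying $\psi\otimes\mathrm{id}$ gives $\eta_2=\sum_{k}\psi(x_k)y_k$; the $k\ge r$ terms vanish by the choice of $\psi$, while the $k\le r-1$ terms land in $(R_2)_+^{s}L_2$ because $r+s-1-k\ge s$, contradicting $\eta_2\notin(R_2)_+^{s}L_2$.

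Hence $\beta_{r+s-1}(L,R)\ge\beta_r(G)+\beta_s(H)-2$. Applying Lemma~\ref{lemma:beta-beta*} to the group $G\times H$ with the module $V\oplus W$ produces an irreducible $(G\times H)$-module $U_0$ with $\beta_{r+s-1}(G\times H,V\oplus W\oplus U_0)\ge\beta_{r+s-1}(L,R)+1\ge\beta_r(G)+\beta_s(H)-1$, yielding the theorem. The main subtlety is the choice of witnesses: forming a product $f_1f_2$ of two maximal invariants sits in degree $\beta_r(G)+\beta_s(H)$ and ends up only bounding $\beta_{r+s}(G\times H)$ rather than $\beta_{r+s-1}(G\times H)$. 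The remedy is to descend to non-invariant polynomials (shedding two units of degree via Corollary~\ref{cor:beta-beta*}), where the bigraded ideal-power decomposition reduces the exponent of $R_+$ by only one, and then to reclaim one unit of degree via a final application of Lemma~\ref{lemma:beta-beta*}.
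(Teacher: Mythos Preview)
Your argument is correct and follows essentially the same route as the paper: pick $V,W$ realizing $\beta_r(G)$ and $\beta_s(H)$, use Corollary~\ref{cor:beta-beta*} to drop to $\beta_r(L_1,R_1)+\beta_s(L_2,R_2)$, establish the tensor inequality $\beta_{r+s-1}(L_1\otimes L_2,R_1\otimes R_2)\ge\beta_r(L_1,R_1)+\beta_s(L_2,R_2)$, and finish with Lemma~\ref{lemma:beta-beta*}. The only cosmetic differences are that the paper proves the tensor inequality via an explicit basis argument rather than your linear-functional trick, and that your expression $\sum_k x_k\otimes y_k$ should strictly be a double sum $\sum_{k,j}x_{k,j}\otimes y_{k,j}$ since each summand $(R_1)_+^kL_1\otimes(R_2)_+^{r+s-1-k}L_2$ need not consist of pure tensors---but the functional argument goes through unchanged with this correction.
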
 

\begin{proof} 
If $M$ and $N$ are graded modules over the graded algebras $R$ and $S$, respectively, then:
\begin{align} \label{beta GxH}
\beta_{r+s-1}(M \otimes N, R \otimes S) \ge \beta_r(M, R) + \beta_s(N, S)
\end{align}
Indeed, there are elements $x\in M_{\beta_r(M,R)}\setminus R_+^rM$ and $y\in N_{\beta_s(N,S)}\setminus S_+^sN$. 
Take an ${\mathbb{F}}$-vector space basis $\mathcal{B}_1$  of $R_+^rM$, and extend $\mathcal{B}_1\cup\{x\}$ to a basis $\mathcal{B}$ of $M$. 
Similarly, let $\mathcal{C}_1$ be a basis of $S_+^sN$, and extend $\mathcal{C}_1\cup\{y\}$ to a basis $\mathcal{C}$ in $N$. 
Then  $\mathcal{A}:=\{u\otimes v\mid u\in\mathcal{B}_1,v\in\mathcal{C}\text{ or }u\in\mathcal{B},v\in\mathcal{C}_1\}$ is a basis of 
$T:= R_+^rM\otimes N+M\otimes S_+^sN$. 
On the other hand $\mathcal{A} \cup \{x\otimes y\}$ is part of the basis $\{u\otimes v\mid u\in \mathcal{B},v\in\mathcal{C}\}$ of $M\otimes N$, 
showing that $x \otimes y \not\in T$. But $ T \supseteq (R\otimes S)_+^{r+s-1}(M\otimes N)$ and $\deg(x\otimes y)=\beta_r(M,R)+\beta_s(N,S)$, whence \eqref{beta GxH} readily follows.

Now take a $G$-module $V$ with $\beta_r(G,V)=\beta_r(G)$, 
and an  $H$-module $W$ with $\beta_s(H,W)=\beta_s(H)$. 
Given that ${\mathbb{F}}[V \oplus W]^{G \times H} = {\mathbb{F}}[V]^G \otimes {\mathbb{F}}[W]^H$ we have the following sequence of inequalities:
\begin{align*}
\beta_{r+s-1}(G \times H) - 1 
&\ge   \beta_{r+s-1} ({\mathbb{F}}[V\oplus W],{\mathbb{F}}[V\oplus W]^{G\times H}) 	&& \text{by Lemma~\ref{lemma:beta-beta*} }\\
&\ge \beta_r({\mathbb{F}}[V],{\mathbb{F}}[V]^G) + \beta_s({\mathbb{F}}[W],{\mathbb{F}}[W]^H) && \text{by \eqref{beta GxH} } \\ 
&= \beta_{r}(G) + \beta_s(H) -2 && \text{by Corollary~\ref{cor:beta-beta*} } 
\end{align*}

\end{proof}

\begin{remark}
In the special case $G=A$ abelian we recover  Lemma 6.1.4 from \cite{geroldinger-halterkoch}. 
\end{remark}


\section{The growth rate of $\beta_k(G)$}\label{sec:growth}

In the study of the Noether bound for finite groups, the following inequalities due to Schmid \cite{schmid} in characteristic zero and extended to positive non-modular characteristic by Sezer \cite{sezer}, Fleischmann \cite{fleischmann:2}, Knop \cite{knop} are very useful: 
For a normal subgroup  $N$ in $G$ and an arbitrary subgroup $H$ in $G$ we have $\beta(G)\le [G:H]\beta(H)$ and $\beta(G)\le \beta(N)\beta(G/N)$. 
Our motivation to study $\beta_k(G)$ stems from the following strengthening proved in \cite{cziszter-domokos:1}: 
\begin{align*}\beta_k(G)&\le \beta_{\beta_k(G/N)}(N) \\
\beta_k(G)&\le \beta_{k[G:H]}(H)\end{align*} 
The  estimates for $\beta(G)$ obtained in terms of generalized Noether numbers of its subquotient $K$  using the latter inequalities  are better than the estimates derived from the original inequalitites, as soon as $\beta_k(K)$ is strictly smaller than $k\beta(K)$. 
A partial theoretical explanation of the experience that indeed, typically $\beta_k(G)$ is strictly smaller than $k\beta(G)$ and its extent is obtained in this section. 

We start by studying  in general for a fixed commutative graded ${\mathbb{F}}$-algebra $R$
the behavior of $\beta_k(R)$ as a function of $k$. 
The surjection $R_+/R_+^{k+1} \to R_+/R_+^k$ shows that $\beta_{k}(R) \le \beta_{k+1}(R)$ for all $k$. 
We note that  $\beta_k(R)$ is not always a strictly increasing function of $k$: 

\begin{ex}
Consider the ring $R = {\mathbb{F}}[a,b] / (b^3-a^9, ab^2-a^7)$
and define a grading by setting  $\deg(a)=1$ and $\deg(b)=3$. 
Then $b^2\in R_+^2\setminus R_+^3$, and $b^2$ spans the degree $6$ homogeneous component of $R_+^2/R_+^4$.  
In this case for all $l\geq 7$ we have that $R_l\subseteq R_+^5$, hence 
$6=\beta_2(R)=\beta_3(R)=\beta_4(R)$. 
\end{ex}

On the other hand, for a fixed $G$-module $V$, $\beta_k(G,V)$  is unbounded because of the following trivial observation: 
\begin{lemma} \label{beta unbounded}
$\beta_k(R)$  as a function of $k$ is bounded  if and only if  
there is an integer $n$ such that $R_i = \{ 0 \}$ for all $i \ge n$. 
\end{lemma}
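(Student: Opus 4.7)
The plan is to handle the two implications separately, with the forward direction being the only one carrying any real content.

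For the easy direction ($\Leftarrow$), if $R_i = \{0\}$ for all $i \ge n$ then $R_+/R_+^{k+1}$ is a quotient of $R_+$, hence its top degree does not exceed $n-1$ for any $k$. Consequently $\beta_k(R) \le n-1$ uniformly in $k$, so the function is bounded.

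For the forward direction ($\Rightarrow$), the plan is to suppose that $\beta_k(R) \le N$ holds for every $k \ge 1$ and to deduce that $R_i = \{0\}$ for every $i > N$. Unwinding the definition of $\beta_k(R)$ as the top degree of $R_+/R_+^{k+1}$, the hypothesis says that every homogeneous component $R_i$ with $i > N$ is contained in $R_+^{k+1}$, for each $k$. The observation to exploit is that, because $R_0$ is either ${\mathbb{F}}$ or $\{0\}$, every element of $R_+$ has degree at least $1$; hence any product of $k+1$ elements of $R_+$ has degree at least $k+1$, so $R_+^{k+1}$ meets any homogeneous component of degree $\le k$ only in $\{0\}$.

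Combining these two facts for a fixed $i > N$ by choosing $k = i$ yields $R_i \subseteq R_+^{i+1} \cap R_i = \{0\}$, so $R_i = \{0\}$ for all $i > N$ and the claim holds with $n = N+1$. I do not expect any real obstacle: the only step requiring care is the correct reading of the definition of $\beta_k(R)$, after which the argument reduces to the standard fact that $\bigcap_k R_+^{k+1} = \{0\}$ in any graded algebra whose grading is bounded below.
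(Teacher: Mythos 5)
Your proof is correct and follows essentially the same route as the paper: both directions rest on the single observation that $R_+^{k+1}\subseteq\bigoplus_{d\ge k+1}R_d$, which you apply with $k=i$ to kill $R_i$ for $i>N$, while the paper states the same step contrapositively as ``$R_n\neq\{0\}$ implies $\beta_n(R)\ge n$''. No gap; the logical reformulation is immaterial.
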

\begin{proof}
 Note that $R_+^{n+1}\subseteq \bigoplus_{d\ge n+1}R_d$. Hence if
$R_n\neq \{0\}$, then  $R_n\nsubseteq R_+^{n+1}$, implying  $\beta_n(R)\ge n$.
Conversely,  if  $R_i = \{ 0 \}$ for all $i \ge n$ then $\beta_i(R)< n$.
\end{proof}

\begin{lemma}\label{lemma:k/r} 
For any positive integers $r\leq k$ we have the inequality 
\[\beta_k(R)\leq \frac kr \beta_r(R).\]  
\end{lemma}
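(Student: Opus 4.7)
The plan is to argue by induction on $k$, with base case $k=r$ immediate. Throughout I write $d := \beta_r(R)$. For the inductive step I take a homogeneous $f \in R_+$ of degree $e > kd/r$ and aim to show $f \in R_+^{k+1}$; since $\beta_k(R)$ is by definition the top degree of $R_+/R_+^{k+1}$, this will yield $\beta_k(R) \leq kd/r$.

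The inductive hypothesis $\beta_{k-1}(R) \leq (k-1)d/r < e$ places $f$ in $R_+^k$, so I can expand $f$ as a sum of monomials of the shape $Y = g_1 g_2 \cdots g_k$ with each $g_j \in R_+$ homogeneous. It then suffices to show that every such $Y$ lies in $R_+^{k+1}$. Write $e_j := \deg g_j$, so that $e_1 + \cdots + e_k = e > kd/r$.

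If some factor has $e_j > d$, then by the definition of $d = \beta_r(R)$ that factor already lies in $R_+^{r+1}$, giving $Y \in R_+^{r+1} \cdot R_+^{k-1} = R_+^{k+r} \subseteq R_+^{k+1}$ (using $r \geq 1$). Otherwise every $e_j \leq d$; after relabelling so that $e_1 \geq e_2 \geq \cdots \geq e_k$, the elementary inequality that the top $r$ of $k$ non-negative values sum to at least the $(r/k)$-fraction of the total yields $e_1 + \cdots + e_r \geq (r/k)e > d$. Hence the subproduct $g_1 g_2 \cdots g_r$ is a homogeneous element of $R_+$ of degree exceeding $d$, so it too lies in $R_+^{r+1}$, and consequently $Y = (g_1 \cdots g_r)(g_{r+1} \cdots g_k) \in R_+^{r+1} \cdot R_+^{k-r} = R_+^{k+1}$.

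The main obstacle I anticipate is recognising that a one-factor argument (finding a single $g_j$ of degree exceeding $d$) is too weak, because the hypothesis $e > kd/r$ does not force any individual factor to be large once $k$ is much bigger than $r$; the fix is to group precisely the top $r$ factors together, where the averaging estimate makes the bound $e > kd/r$ close up exactly.
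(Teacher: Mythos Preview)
Your proof is correct and rests on the same core idea as the paper's: among the factors of a product representing $f$, the $r$ largest have combined degree exceeding $\beta_r(R)$, so their subproduct already lies in $R_+^{r+1}$. The paper packages this as a direct contradiction (taking $f=f_1\cdots f_l$ with $l\le k$ and $f\notin R_+^{l+1}$) rather than inducting on $k$, and your first case (some $e_j>d$) is actually subsumed by the averaging argument of the second case, but these are cosmetic differences.
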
 

\begin{proof} Suppose to the contrary that $\beta_k(R)>\frac kr \beta_r(R)$. 
Then there exist homogeneous elements  $f_1,\ldots,f_l\in R_+$  
such that $l\le k$, 
$f:=f_1\cdots f_l$ is not contained in $R_+^{l+1}$, and $\deg(f)>\frac kr \beta_r(R)$ 
(this forces that $l> r$). We may suppose that $\deg(f_1)\geq\cdots\geq \deg(f_l)$. 
Then 
\[\frac{\deg(f_1)+\dots+\deg(f_r)}{r}\ge \deg(f_r)\ge \deg(f_{r+1})\ge\frac{\deg(f_{r+1})+\dots+\deg(f_l)}{l-r}\] 
hence 
\[\deg(f_1\cdots f_r)\ge \frac{r}{l}\deg(f_1\dots f_l)\ge \frac{r}{k}\deg(f)>\beta_r(R).\] 
It follows 
that $h:=f_1\cdots f_r\in R_+^{r+1}$, hence 
$f=hf_{r+1}\cdots f_l\in R_+^{l+1}$, a contradiction. 
\end{proof} 

By Lemma~\ref{lemma:k/r} the sequence $\frac{\beta_k(R)}{k}$ is monotonically decreasing,
and as it is also non-negative, it must converge to a certain limit. Our next goal  will be to clarify what is the value
of this limit.
For  a graded finitely generated commutative $\mathbb{F}$-algebra $R$ with $R_0=\mathbb{F}$ set 
\begin{align*}
\sigma(R) & :=\min\{d \in \mathbb{N}: R \text{ is finitely generated as a module over } {\mathbb{F}}[R_{\le d}]  \} 
\end{align*}  
Equivalently, $\sigma(R)$ is the minimal integer $d$ such that $\beta(R_+, {\mathbb{F}}[R_{\le d}])$ is finite.

\begin{proposition} \label{prop:sigma also}
Let $R$ be a finitely generated commutative graded ${\mathbb{F}}$-algebra. For any positive integer $k$ we have $\beta_k(R) \ge k\sigma(R)$.
\end{proposition}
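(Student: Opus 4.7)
The plan is to exhibit a witness of the form $a^k$ for a carefully selected $a \in R_\sigma$, where $\sigma := \sigma(R)$ (one may assume $\sigma \ge 1$; the case $\sigma=0$ is trivial). Let $J$ be the homogeneous ideal of $R$ generated by $R_1 \oplus \cdots \oplus R_{\sigma-1}$, and set $\bar R := R/J$. I would aim to find $a \in R_\sigma$ whose image $\bar a \in \bar R$ is not nilpotent; combined with the reduction below, this yields $\beta_k(R) \ge \deg(a^k) = k\sigma$.

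The reduction step is a quick pigeonhole: in any product of $k+1$ positive-degree homogeneous elements whose degrees sum to $k\sigma$, at least one factor must have degree $< \sigma$, for otherwise the sum would be at least $(k+1)\sigma > k\sigma$. Thus the whole product lies in $J$, and so $(R_+^{k+1})_{k\sigma} \subseteq J$. If additionally $\bar a^k \ne 0$ in $\bar R$, then $a^k \in R_{k\sigma} \setminus J$ cannot belong to $R_+^{k+1}$, giving $\beta_k(R) \ge k\sigma$.

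The main obstacle is producing a non-nilpotent $\bar a \in \bar R_\sigma$. Three facts, extracted from $\sigma = \sigma(R)$ via graded Nakayama, set the stage: (i) $\bar R$ is infinite-dimensional over ${\mathbb{F}}$, since $R$ is not finitely generated as a module over ${\mathbb{F}}[R_{<\sigma}]$; (ii) $\bar R/(\bar R_\sigma)$ is finite-dimensional, since $R$ is finitely generated over ${\mathbb{F}}[R_{\le \sigma}]$ and the preimage of $(\bar R_\sigma)$ in $R$ is the ideal generated by $R_1 \oplus \cdots \oplus R_\sigma$; and (iii) $\bar R_\sigma \ne 0$, for otherwise ${\mathbb{F}}[R_{\le \sigma}] = {\mathbb{F}}[R_{<\sigma}]$, contradicting the definition of $\sigma$. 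If every element of $\bar R_\sigma$ were nilpotent, the finitely generated ideal $I := (\bar R_\sigma)$ of the Noetherian commutative ring $\bar R$ would be a nil ideal, hence nilpotent, so $I^N = 0$ for some $N$; then the filtration $\bar R \supseteq I \supseteq I^2 \supseteq \cdots \supseteq I^N = 0$ would have each successive quotient $I^i/I^{i+1}$ finitely generated as a module over the finite-dimensional algebra $\bar R/I$, forcing $\bar R$ itself to be finite-dimensional and contradicting~(i). Hence the desired $\bar a$ exists, any homogeneous lift $a \in R_\sigma$ is suitable, and the proof is complete.
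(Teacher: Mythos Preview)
Your proof is correct and follows the same strategy as the paper's: exhibit $a\in R_\sigma$ with $a\notin\sqrt{J}$ where $J=\langle R_1,\dots,R_{\sigma-1}\rangle$, then note via pigeonhole that $(R_+^{k+1})_{k\sigma}\subseteq J$, so $a^k\notin R_+^{k+1}$. The only cosmetic difference is that the paper produces such an $a$ in one line from the standard equivalence ``$R$ is module-finite over ${\mathbb{F}}[h_1,\dots,h_s]$ iff $R_+=\sqrt{\langle h_1,\dots,h_s\rangle}$'' (giving $R_\sigma\not\subseteq\sqrt{J}$ directly), whereas you reprove the needed direction via the Noetherian filtration argument in $\bar R=R/J$.
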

\begin{proof}
It is well known that given a set $h_1,\dots,h_s\in R$ of homogeneous elements,  $R$ is a finitely generated module over its subalgebra 
${\mathbb{F}}[h_1,\dots,h_s]$ if and only if $R_+= \sqrt{\langle h_1,\dots,h_s\rangle }$, the radical of the ideal generated by the $h_i$. 
If $R$ is finite dimensional then $\sigma(R)=0$ and our statement obviously holds. Suppose $\dim_{{\mathbb{F}}}(R)=\infty$, hence 
$\sigma(R)>0$. 
By definition of $t:=\sigma(R)$  we have $R_+\neq \sqrt{\langle \bigoplus_{d=1}^{t-1}R_d\rangle}$, on the other hand 
$R_+=\sqrt{\langle \bigoplus_{d=1}^tR_d\rangle}$, hence there exists  an $f\in R_t$ with $f\notin \sqrt{\langle \bigoplus_{d=1}^{t-1}R_d\rangle}$. 
We claim that $f^k\notin R_+^{k+1}$. Indeed, $f^k\in R_+^{k+1}$ would imply  $f^k\in \langle \bigoplus_{d=1}^{t-1}R_d\rangle$, contrary to the choice  of $f$. 
Thus $\beta_k(R)\ge \deg(f^k)=k\sigma(R)$. 
\end{proof}
By definition of $\sigma(R)$ the number 
 \[ \eta(R) := \beta(R_+,  {\mathbb{F}}[R_{\le \sigma(R)}])\] 
is finite. 
Moreover, any homogeneous element $f \in R$ with $\deg(f) > \eta(R)$ belongs to the ideal $(\bigoplus_{d=1}^{\sigma(R)} R_d)\cdot R_+$, hence
$\beta(R) \le \eta(R)$ and 
more generally, by induction on $k$ one obtains 
\[\beta_k(R)\le (k-1)\sigma(R)+\eta(R).\]
We know from Lemma~\ref{beta unbounded} that  an integer $k_0$ exists such that 
 $\beta_k(R)  \ge \eta(R) - \sigma(R) $ holds
for any $k \ge k_0$. Hence if  $\deg(f) > \beta_k(R)+\sigma(R)$ then $f \in R$ 
can be written in the form $\sum_i g_ih_i$ where $0<\deg(g_i) \le \sigma(R)$ and $\deg(h_i) > \beta_k(R)$,
whence $h_i \in R_+^{k+1}$ and $f \in R_+^{k+2}$. 
This  argument  shows that for any $k\ge k_0$ we have
\begin{align} \label{sigma primko} \beta_{k+1}(R) \le \beta_k(R) + \sigma(R) \end{align}
 This simple observation  immediately leads us to the following result: 

\begin{proposition}\label{prop:linearity} 
Let $R$ be a finitely generated commutative graded ${\mathbb{F}}$-algebra. 
Then there are non-negative integers $k_0(R)$ and $\beta_0(R)$ such that 
\[ \beta_k(R) = k\sigma(R) + \beta_0(R) \quad \text{ for every } k > k_0(R).\]
\end{proposition}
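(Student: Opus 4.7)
The plan is to consider the sequence $a_k := \beta_k(R) - k\sigma(R)$ of integers and show that it is eventually non-increasing, bounded below by zero, and hence stabilizes. Since everything we need is already set up in the paragraph preceding the proposition, this will be short.

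First I would invoke Proposition~\ref{prop:sigma also} to get $a_k \ge 0$ for every $k \ge 1$. This supplies the lower bound that will force stabilization and will also guarantee that the limiting value $\beta_0(R)$ is non-negative, as required by the statement.

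Next I would invoke the inequality \eqref{sigma primko}, which tells us that there is a threshold $k_0$ (whose existence was justified via Lemma~\ref{beta unbounded}) such that $\beta_{k+1}(R) \le \beta_k(R) + \sigma(R)$ for all $k \ge k_0$. Rearranging this gives $a_{k+1} \le a_k$ for $k \ge k_0$. Thus $(a_k)_{k \ge k_0}$ is a non-increasing sequence of non-negative integers; any such sequence is eventually constant. So there exists $k_0(R) \ge k_0$ and a non-negative integer $\beta_0(R)$ with $a_k = \beta_0(R)$ for all $k > k_0(R)$, which is exactly the claimed formula $\beta_k(R) = k\sigma(R) + \beta_0(R)$.

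There is essentially no obstacle: both the lower bound $\beta_k(R) \ge k\sigma(R)$ and the ``slope'' bound $\beta_{k+1}(R) \le \beta_k(R) + \sigma(R)$ have already been established in the text preceding the proposition, and the well-ordering of $\mathbb{N}$ finishes the argument. If anything needs a sentence of care, it is the verification that the hypothesis ``$k \ge k_0$'' from \eqref{sigma primko} does not interfere with concluding stabilization on an initial segment: but since we only claim equality for $k > k_0(R)$, we are free to absorb the starting index into $k_0(R)$.
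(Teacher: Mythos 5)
Your argument is correct and coincides with the paper's own proof: both define $a_k := \beta_k(R) - k\sigma(R)$, use Proposition~\ref{prop:sigma also} for non-negativity and the inequality \eqref{sigma primko} for eventual monotone decrease, and conclude stabilization. No issues.
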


\begin{proof}
Consider the sequence of integers $a_k := \beta_k(R) - k\sigma(R)$, where $k=k_0,k_0+1,\dots$ and 
$\beta_{k_0}(R)\ge \eta(R)-\sigma(R)$. 
By \eqref{sigma primko} it is monotonically decreasing and by Proposition~\ref{prop:sigma also} it is non-negative, 
therefore it stabilizes after finitely many steps, and this is what has been claimed. 
\end{proof}

For any $G$-module $V$ we  write 
\[\sigma(G,V) := \sigma({\mathbb{F}}[V]^G).\]
This quantity was much studied for $G$ a linearly reductive group  (see e.g. \cite{derksen} or \cite{popov}) and has the following well-known interpretation 
by the Hilbert Nullstellensatz: 

\begin{proposition} \label{prop:hilbert zero}
$\sigma(G,V)$ is the minimal positive integer $n$ such that there exists a subset of ${\mathbb{F}}[V]^G_+$ consisting of homogeneous elements with degree at most $n$,  whose  common zero locus in 
$\bar{\mathbb{F}} \otimes _{{\mathbb{F}}} V$ is $\{0\}$ (where $\bar{\mathbb{F}}$ stands for the algebraic closure of ${\mathbb{F}}$).
\end{proposition}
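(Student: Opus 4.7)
The plan is to convert the statement into the radical-ideal characterization of $\sigma(R)$ used in the proof of Proposition~\ref{prop:sigma also}: setting $R:={\mathbb{F}}[V]^G$, the algebra $R$ is a finitely generated module over ${\mathbb{F}}[R_{\le d}]$ if and only if $R_+=\sqrt{\langle R_{\le d}\rangle}$ (the radical being taken inside $R$). Writing $n^*$ for the minimum described in the proposition, the task therefore reduces to showing $\sigma(G,V)=n^*$.

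A preliminary fact I shall use is that the common zero locus in $\bar{\mathbb{F}}\otimes_{{\mathbb{F}}}V$ of all of $R_+$ equals $\{0\}$: the polynomial $\prod_{g\in G}(T-x^g)$ is monic in $T$ with coefficients in $R$, so setting $T=x$ yields a relation $x^{|G|}+\sum_{i=1}^{|G|}c_ix^{|G|-i}=0$ with $c_i\in R_+$ for every coordinate $x\in V^*$, forcing $x(v)^{|G|}=0$ at any common zero $v$ of $R_+$. Given this, the inequality $\sigma(G,V)\ge n^*$ is immediate: letting $h_1,\dots,h_s$ be a homogeneous ${\mathbb{F}}$-basis of $R_{\le\sigma(G,V)}$, the equality $R_+=\sqrt{\langle h_1,\dots,h_s\rangle}$ means every $f\in R_+$ has some power $f^N$ in $\langle h_1,\dots,h_s\rangle$; hence any common zero of the $h_i$ is a common zero of all of $R_+$, and must therefore be $0$.

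For the reverse inequality $\sigma(G,V)\le n^*$, suppose $h_1,\dots,h_s\in R_+$ are homogeneous of degree at most $n^*$ with common zero $\{0\}$ in $\bar{\mathbb{F}}\otimes_{{\mathbb{F}}}V$. The Hilbert Nullstellensatz applied over $\bar{\mathbb{F}}$ says that the radical of the ideal generated by the $h_i$ in $\bar{\mathbb{F}}[V]$ is the irrelevant maximal ideal, so for each homogeneous $f\in R_+$ there is an $N$ with $f^N=\sum_i\alpha_ih_i$ for some $\alpha_i\in\bar{\mathbb{F}}[V]$. A standard field-of-definition descent---the solution set of this ${\mathbb{F}}$-linear equation is an affine subspace defined over ${\mathbb{F}}$ that is non-empty over $\bar{\mathbb{F}}$, hence non-empty over ${\mathbb{F}}$---allows the $\alpha_i$ to be chosen in ${\mathbb{F}}[V]$. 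Applying the Reynolds operator $\rho(\cdot):=\tfrac{1}{|G|}\sum_{g\in G}(\cdot)^g$, which is defined because ${\mathrm{char}}({\mathbb{F}})\nmid|G|$ and acts as the identity on $R$ while mapping ${\mathbb{F}}[V]$ into $R$, then yields $f^N=\sum_i\rho(\alpha_i)h_i$ with $\rho(\alpha_i)\in R$. Thus $R_+\subseteq\sqrt{\langle h_1,\dots,h_s\rangle}\subseteq\sqrt{\langle R_{\le n^*}\rangle}$, proving $\sigma(G,V)\le n^*$.

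The main obstacle is the two-stage descent in the last paragraph: descending from $\bar{\mathbb{F}}[V]$-coefficients to ${\mathbb{F}}[V]$-coefficients is routine linear algebra, but the subsequent descent to $R$ via Reynolds averaging is where the non-modularity hypothesis ${\mathrm{char}}({\mathbb{F}})\nmid|G|$ enters crucially.
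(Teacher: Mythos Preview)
The paper does not actually supply a proof of this proposition: it is stated as a ``well-known interpretation by the Hilbert Nullstellensatz'' and left at that. Your argument is a correct and essentially standard elaboration of what the paper is invoking, built on exactly the radical-ideal characterization quoted from the proof of Proposition~\ref{prop:sigma also}. The two directions are handled as expected: the preliminary fact that the full locus of $R_+$ is $\{0\}$ (via the integrality relation $\prod_{g}(T-x^g)$) together with $R_+=\sqrt{\langle R_{\le\sigma(G,V)}\rangle}$ gives $\sigma(G,V)\ge n^*$; and the Nullstellensatz over $\bar{{\mathbb{F}}}$, followed by the linear-algebra descent to ${\mathbb{F}}[V]$ and the Reynolds-operator descent to $R$, gives $\sigma(G,V)\le n^*$. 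The identification of the Reynolds step as the place where ${\mathrm{char}}({\mathbb{F}})\nmid|G|$ enters is also apt.

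One cosmetic slip: in the inequality $\sigma(G,V)\ge n^*$ you take $h_1,\dots,h_s$ to be a homogeneous basis of $R_{\le\sigma(G,V)}$, which includes the constant $1\in R_0$; you mean a basis of $\bigoplus_{d=1}^{\sigma(G,V)}R_d$, since the proposition asks for $h_i\in R_+$ (and with $1$ among the $h_i$ the ideal $\langle h_1,\dots,h_s\rangle$ is all of $R$, breaking the radical characterization as stated). This does not affect the substance of the argument.
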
 

Supposing $|G|\in{\mathbb{F}}^\times$ we have 
$\sigma(G,V) \le \beta(G,V) \le |G|$, and define 
\[ \sigma(G) := \sup\{\sigma(G,V)\mid V\mbox{ is a }G\mbox{-module}\}.\]
(In fact the inequality $\sigma(G)\le |G|$ holds in the modular case ${\mathrm{char}}({\mathbb{F}})\mid |G|$ as well, see Remark~\ref{remark:modularsigma} (ii).) 
As an immediate corollary of Proposition~\ref{prop:linearity} we obtain that for any $G$-module $V$ there exist non-negative integers 
$k_0(G,V)$ and $\beta_0(G,V)$ such that for all $k\ge k_0(G,V)$ we have 
$\beta_k(G,V)=k\sigma(G,V)+\beta_0(G,V)$. 
In characteristic zero the following can be proved: 

\begin{corollary}\label{cor:growthbetag} 
Suppose ${\mathrm{char}}({\mathbb{F}})=0$ and let $G$ be a finite group. 
There exist non-negative integers $k_0(G)$ and $\beta_0(G)$ such that for all 
$k\ge k_0(G)$ we have 
\[\beta_k(G)=k\sigma(G)+\beta_0(G).\] 
In particular, 
\[\lim_{k\to\infty}\frac{\beta_k(G)}k=\sigma(G).\]
\end{corollary}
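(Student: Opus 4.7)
The plan is to reduce the supremum in the definition of $\beta_k(G)$ to the invariant ring of a single $G$-module $W$ and then to apply Proposition~\ref{prop:linearity} to $R:=\mathbb{F}[W]^G$. First I will check two monotonicity statements: for any inclusion of $G$-modules $U\subseteq V$, one has $\beta_k(G,U)\le\beta_k(G,V)$ and $\sigma(G,U)\le\sigma(G,V)$. Both follow from the $G$-equivariant graded $\mathbb{F}$-algebra retraction $\pi\colon\mathbb{F}[V]^G\twoheadrightarrow\mathbb{F}[U]^G$ obtained by setting the complementary coordinates to zero: any decomposition of $f\in\mathbb{F}[U]^G$ as a $(k+1)$-fold product of positive-degree elements of $\mathbb{F}[V]^G$ descends under $\pi$ to a product with factors in $\mathbb{F}[U]^G_+$, while the $\sigma$-inequality follows from Proposition~\ref{prop:hilbert zero} upon restricting invariants that cut out $\{0\}\subset V$. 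Since every $G$-module embeds as a direct summand in some $V_{\mathrm{reg}}^n$, where $V_{\mathrm{reg}}$ denotes the regular representation, this rewrites $\beta_k(G)=\sup_n\beta_k(G,V_{\mathrm{reg}}^n)$ and $\sigma(G)=\sup_n\sigma(G,V_{\mathrm{reg}}^n)$ as non-decreasing limits in $n$.

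The key input will be Weyl's theorem on polarization: in characteristic zero and for $n\ge\dim V_{\mathrm{reg}}=|G|$, the algebra $\mathbb{F}[V_{\mathrm{reg}}^n]^G$ is generated as an $\mathbb{F}$-algebra by the images of $\mathbb{F}[V_{\mathrm{reg}}^{|G|}]^G$ under the $\binom{n}{|G|}$ natural inclusions $\iota_S\colon V_{\mathrm{reg}}^{|G|}\hookrightarrow V_{\mathrm{reg}}^n$ corresponding to the subsets $S\subseteq\{1,\dots,n\}$ of size $|G|$. From this I will deduce that for every fixed $k\ge 1$ the sequences $\sigma(G,V_{\mathrm{reg}}^n)$ and $\beta_k(G,V_{\mathrm{reg}}^n)$ are already constant for $n\ge|G|$. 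The $\sigma$-case is straightforward: any $h_1,\dots,h_s\in\mathbb{F}[V_{\mathrm{reg}}^{|G|}]^G$ of degree $\le\sigma(G,V_{\mathrm{reg}}^{|G|})$ with common zero $\{0\}$ in $V_{\mathrm{reg}}^{|G|}$ yield the family $\{\iota_S(h_i)\}$ of the same degree bound whose common zero in $V_{\mathrm{reg}}^n$ is again $\{0\}$, as a point $v=(v_1,\dots,v_n)$ at which every $\iota_S(h_i)$ vanishes has $v_S=0$ in $V_{\mathrm{reg}}^{|G|}$ for every $S$, forcing $v=0$. Once the joint stabilization is established, taking $W:=V_{\mathrm{reg}}^{|G|}$ yields $\sigma(G,W)=\sigma(G)$ and $\beta_k(G,W)=\beta_k(G)$ for every $k\ge 1$. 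Proposition~\ref{prop:linearity} applied to $R:=\mathbb{F}[W]^G$ then supplies integers $k_0:=k_0(R)$ and $\beta_0:=\beta_0(R)$ with $\beta_k(R)=k\sigma(R)+\beta_0$ for $k>k_0$; setting $k_0(G):=k_0$ and $\beta_0(G):=\beta_0$ completes the proof of the linear formula, and the limit identity $\lim_{k\to\infty}\beta_k(G)/k=\sigma(G)$ drops out on dividing through by $k$.

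The hard part will be the $\beta_k$-stabilization $\beta_k(G,V_{\mathrm{reg}}^n)=\beta_k(G,V_{\mathrm{reg}}^{|G|})$ for $n\ge|G|$: Weyl polarization delivers algebra generation but not automatic control of the ideal power $(\mathbb{F}[V_{\mathrm{reg}}^n]^G_+)^{k+1}$. To close this gap, given a homogeneous $f\in\mathbb{F}[V_{\mathrm{reg}}^n]^G$ of degree $>\beta_k(G,V_{\mathrm{reg}}^{|G|})$ expressed through polarization as $f=\sum_\alpha c_\alpha\prod_{j=1}^{r_\alpha}h_{\alpha,j}$ with each $h_{\alpha,j}\in\iota_{S_{\alpha,j}}(\mathbb{F}[V_{\mathrm{reg}}^{|G|}]^G_+)$, one must show that every monomial term lies in $(\mathbb{F}[V_{\mathrm{reg}}^n]^G_+)^{k+1}$. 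The argument will combine a pigeonhole distribution of $\deg f$ among the $r_\alpha$ factors with Lemma~\ref{lemma:k/r} (which furnishes $\beta_j(R)\le(j/r)\beta_r(R)$ for $r\le j$), so that at least one factor $h_{\alpha,j}$ sits deep enough inside $(\mathbb{F}[V_{\mathrm{reg}}^{|G|}]^G_+)^{s}$ to boost the whole monomial, together with the remaining $r_\alpha-1$ positive-degree factors, into the required $(k+1)$-th power of $\mathbb{F}[V_{\mathrm{reg}}^n]^G_+$. Completing this combinatorial analysis is the technical heart of the proof; once it is in place, the application of Proposition~\ref{prop:linearity} is purely formal.
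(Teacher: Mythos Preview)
Your overall plan --- reduce $\beta_k(G)$ to $\beta_k$ of a single invariant ring and then invoke Proposition~\ref{prop:linearity} --- is exactly the paper's route, and your monotonicity and $\sigma$-stabilization arguments are fine. The genuine gap is the ``technical heart'' you explicitly leave open, and the method you sketch cannot close it. From the algebra-generation form of Weyl's theorem you write $f=\sum_\alpha c_\alpha\prod_{j} h_{\alpha,j}$ with each $h_{\alpha,j}\in\iota_{S_{\alpha,j}}(\mathbb{F}[V_{\mathrm{reg}}^{|G|}]^G_+)$. Letting $t_j$ be maximal with $h_{\alpha,j}$ in the $t_j$-th power of that subring's augmentation ideal, one has $\deg h_{\alpha,j}\le\beta_{t_j}(G,V_{\mathrm{reg}}^{|G|})$, and forcing $\sum_j t_j\ge k+1$ from $\sum_j\deg h_{\alpha,j}>\beta_k$ amounts to the superadditivity $\sum_j\beta_{t_j}\le\beta_{\sum_j t_j}$. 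But Proposition~\ref{prop:linearity} itself gives $\beta_s=s\sigma+\beta_0$ eventually, so $\beta_a+\beta_b=\beta_{a+b}+\beta_0>\beta_{a+b}$ whenever $\beta_0>0$; and Lemma~\ref{lemma:k/r} only says $\beta_s/s$ is non-increasing, which points the wrong way. A single-factor pigeonhole fares no better, and expanding all the way down to algebra generators yields only the trivial bound $\beta_k(G,V_{\mathrm{reg}}^n)\le k\,\beta_1(G,V_{\mathrm{reg}}^{|G|})$.

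The repair (which is what the paper's appeal to Schmid and Weyl actually uses) is to take Weyl's theorem in its \emph{linear} rather than algebra-generation form: for any $G$-module $V$, each graded piece $\mathbb{F}[V^n]^G_d$ is the $\mathbb{F}$-span of $\{Df:f\in\mathbb{F}[V^{\dim V}]^G_d,\ D\in U(\mathfrak{gl}_n)\}$, where the polarization operators $D_{ij}=\sum_\ell x_{i\ell}\,\partial/\partial x_{j\ell}$ generate the $\mathfrak{gl}_n$-action. The crucial point is that each $D_{ij}$ is a degree-preserving $G$-equivariant \emph{derivation} of $\mathbb{F}[V^n]$, so every power $(\mathbb{F}[V^n]^G_+)^{k+1}$ is $D_{ij}$-stable. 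Hence if $d>\beta_k(G,V^{\dim V})$ then $\mathbb{F}[V^{\dim V}]^G_d\subseteq(\mathbb{F}[V^{\dim V}]^G_+)^{k+1}\subseteq(\mathbb{F}[V^n]^G_+)^{k+1}$, and so is its $U(\mathfrak{gl}_n)$-span, namely all of $\mathbb{F}[V^n]^G_d$. This gives $\beta_k(G,V^n)=\beta_k(G,V^{\dim V})$ for every $n$ with no combinatorics needed, after which Proposition~\ref{prop:linearity} applies directly.
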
 

\begin{proof} Denoting by $V_{\mathrm{reg}}$ the regular representation of $G$, we have that $\beta_k(G)=\beta_k(G,V_{\mathrm{reg}})$ holds for all $k$
by the same argument as in the proof of the special case $k=1$ in  \cite{schmid} based on Weyl's theorem on polarization (cf. \cite{weyl}). 
Hence the statement holds by Proposition~\ref{prop:linearity}. 
\end{proof}

\begin{remark}\label{remark:growthrate} 
As we mentioned in the Introduction, Corollary~\ref{cor:growthbetag} in the special case when $G=A$ is abelian is due to 
M. Freeze and W. A. Schmid \cite{freeze-schmid}, and Delorme,  Ordaz and Quiroz \cite{delorme}. For some results on $\eta(A)$ see e.g. \cite{geroldinger-halterkoch} ch. 5.7. 
 \end{remark} 


\section{Some basic properties of $\sigma(G)$}\label{sec:sigmabasic}

In this section we collect some basic statements about $\sigma(G)$ that we will need to prove Theorem~\ref{thm:mainsigma}.

\begin{lemma}\label{lemma:sigma irred}
Let $V_1,..., V_n$ be any $G$-modules and $W=V_1\oplus\dots\oplus V_n$. Then  
\[ \sigma(G, W) = \max_{i=1}^n \sigma(G, V_i) \] 
In particular $\sigma(G) = \max_U \sigma(G,U)$ where $U$ ranges over all isomorphism classes of irreducible $G$-modules.
\end{lemma}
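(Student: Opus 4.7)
The plan is to reduce everything to the Nullstellensatz characterization provided by Proposition~\ref{prop:hilbert zero}, applied in both directions.

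For the inequality $\sigma(G,W)\ge \sigma(G,V_i)$ (for each fixed $i$), I would start with a finite set of homogeneous elements $f_1,\dots,f_m\in {\mathbb{F}}[W]^G_+$ of degree at most $\sigma(G,W)$ whose only common zero in $\bar{\mathbb{F}}\otimes W$ is the origin. The summand $V_i\subseteq W$ is $G$-stable, so restricting along the inclusion $V_i\hookrightarrow W$ yields homogeneous invariants on $V_i$ of the same degrees, and their common zero locus in $\bar{\mathbb{F}}\otimes V_i$ is precisely the intersection of $\bar{\mathbb{F}}\otimes V_i$ with the common zero locus on $W$, i.e.\ $\{0\}$. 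Proposition~\ref{prop:hilbert zero} then gives $\sigma(G,V_i)\le\sigma(G,W)$.

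For the reverse inequality, set $d:=\max_{i}\sigma(G,V_i)$. For each $i$, Proposition~\ref{prop:hilbert zero} supplies homogeneous invariants $f_{i,1},\dots,f_{i,m_i}\in {\mathbb{F}}[V_i]^G_+$ of degree at most $d$ whose common zero in $\bar{\mathbb{F}}\otimes V_i$ is $\{0\}$. Pulling these back via the $G$-equivariant projection $\pi_i\colon W\to V_i$ produces invariants $\pi_i^*(f_{i,j})\in {\mathbb{F}}[W]^G_+$ of the same degrees. For a point $w=(w_1,\dots,w_n)\in\bar{\mathbb{F}}\otimes W$, vanishing of all $\pi_i^*(f_{i,j})$ for a fixed $i$ forces $w_i=0$; taking all $i$ together forces $w=0$. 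Hence $\sigma(G,W)\le d$ by Proposition~\ref{prop:hilbert zero}.

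For the concluding assertion, Maschke's theorem (applicable since $|G|$ is invertible in ${\mathbb{F}}$ by Convention~\ref{conv}) shows that every $G$-module decomposes as a direct sum of irreducibles. Applying the equality just established reduces $\sigma(G,V)$ for any $V$ to the maximum of $\sigma(G,U)$ as $U$ runs through the irreducible summands, and since $\sigma(G,U)$ depends only on the isomorphism class of $U$ and there are only finitely many such classes, the supremum in the definition of $\sigma(G)$ is in fact a maximum over irreducibles.

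There is no serious obstacle here: the argument is a straightforward exercise in the geometric reformulation of $\sigma(G,V)$, the only mild subtlety being to verify that pulling back along $\pi_i$ preserves $G$-invariance and degree, which is immediate since $\pi_i$ is ${\mathbb{F}}$-linear and $G$-equivariant.
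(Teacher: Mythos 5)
Your proof is correct, but it runs along a different track than the paper's. The paper argues purely algebraically, in terms of module-finiteness: for the upper bound it observes that ${\mathbb{F}}[W]=\bigotimes_i{\mathbb{F}}[V_i]$ is a finite module over $\bigotimes_i S_i$, where $S_i\subseteq{\mathbb{F}}[V_i]^G$ is generated in degrees $\le\sigma(G,V_i)$, and this tensor product sits inside ${\mathbb{F}}[R_{\le d}]$ for $R={\mathbb{F}}[W]^G$; for the lower bound it uses that restriction to $V_i$ is a \emph{graded algebra surjection} $R\to{\mathbb{F}}[V_i]^G$ (surjectivity coming from pulling back invariants along the projection, or linear reductivity), and module generators push forward. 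You instead apply the geometric characterization of Proposition~\ref{prop:hilbert zero} in both directions: restrict a system of invariants cutting out the origin in $W$ to get one on $V_i$, and pull back systems on the $V_i$ along the $G$-equivariant projections $\pi_i$ to cut out the origin in $W$. Both arguments are sound, and your degree and equivariance bookkeeping is right; your route is arguably more symmetric and matches the style the paper itself adopts later (in Lemma~\ref{lemma:sigma red2} and Proposition~\ref{sigma ZpZq}), and your version of the lower bound does not even need surjectivity of the restriction on invariant rings. What the paper's route buys is independence from Proposition~\ref{prop:hilbert zero} (which is stated there without proof and whose justification uses the Nullstellensatz together with a Reynolds-operator argument), so it stays entirely inside elementary graded-algebra considerations. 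Your treatment of the final assertion (Maschke plus the boundedness $\sigma(G,U)\le|G|$, or finiteness of the set of irreducible classes, to turn the supremum into a maximum) is also fine and agrees with the paper's intent.
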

\begin{proof} 
Let $R = {\mathbb{F}}[W]^G$ and denote  by $S_i$ the subalgebra of ${\mathbb{F}}[V_i]^G$ generated by its elements of degree at most $\sigma(G,V_i)$. 
As  ${\mathbb{F}}[W] =\otimes_{i=1}^n {\mathbb{F}}[V_i]$ is obviously finitely generated as a 
 $\otimes_{i=1}^nS_i$-module, and $\otimes_{i=1}^nS_i \subseteq {\mathbb{F}}[R_{\le d}]$ where $d := \max_{i=1}^n\sigma(G,V_i)$,
it follows that $\sigma(G,W)\le d$. 

For the reverse inequality let $T= {\mathbb{F}}[V_i]^G$ for a fixed $i$
and observe that the restriction to $V_i$ gives  a graded algebra surjection $\psi: R \to T$.  
Hence the image under $\psi$ of a finite set of module generators of $R$ over its subalgebra ${\mathbb{F}}[R_{\le\sigma(R)}] $ 
must  generate $T= \psi(R)$ as a module over its subalgebra $\psi({\mathbb{F}}[R_{\le\sigma(R)}])= {\mathbb{F}}[T_{\le\sigma(R)}]$, as well.
In particular $\sigma(G,V_i)\leq\sigma(G,W)$.
\end{proof}

\begin{remark} The number $\sigma({\mathbb{F}}[W]^G)$ when $G$ is a linearly reductive group acting algebraically on $W$ plays important role in 
finding explicit upper bounds for $\beta({\mathbb{F}}[W]^G)$, see \cite{popov} and \cite{derksen}. Lemma~\ref{lemma:sigma irred} is special for finite groups, and does not hold in general for reductive algebraic groups, when it may well happen that ${\mathbb{F}}[V]^G={\mathbb{F}}$, but ${\mathbb{F}}[V\oplus\dots\oplus V]^G$ contains non-constant elements 
(for example, take as $V$ the natural module ${\mathbb{F}}^n$ over $G=SL_n({\mathbb{F}})$). 
\end{remark} 

For an abelian group $A$ denote by $\exp(A)$ the least common multiple of the orders of the elements of $A$. 

\begin{corollary} \label{cor: sigma abelian}
Let $A$ be an abelian group and suppose that ${\mathbb{F}}$ is algebraically closed of characteristic not dividing $|A|$. Then 
\[ \sigma(A) = \exp(A).\]
\end{corollary}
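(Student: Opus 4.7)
The plan is to combine Lemma~\ref{lemma:sigma irred} with a direct analysis of one-dimensional representations. Since $\mathbb{F}$ is algebraically closed and ${\mathrm{char}}(\mathbb{F})\nmid |A|$, Maschke's theorem together with Schur's lemma forces every irreducible $A$-module to be one-dimensional, so each one is given by a linear character $\chi\colon A\to\mathbb{F}^\times$. By Lemma~\ref{lemma:sigma irred} it therefore suffices to compute $\sigma(A,U_\chi)$ for each such $U_\chi$ and then take the maximum over all $\chi\in\widehat{A}$.

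For a one-dimensional representation $U_\chi$ with coordinate $x$ of weight $\chi$, the ring of invariants is $\mathbb{F}[U_\chi]^A=\mathbb{F}[x^n]$ where $n=\mathrm{ord}(\chi)$ is the order of $\chi$ in $\widehat{A}$, because $x^m$ is invariant iff $\chi^m=1$. Hence the only homogeneous invariants have degrees in $n\mathbb{N}$, and among the non-constant ones the smallest degree is exactly $n$. Using the Nullstellensatz interpretation of $\sigma$ given in Proposition~\ref{prop:hilbert zero}, I would argue that a homogeneous invariant of degree $<n$ cannot exist at all, so the zero locus of $\mathbb{F}[U_\chi]^A_{\le d}$ is the whole line $\bar{\mathbb{F}}\otimes_\mathbb{F} U_\chi$ for $d<n$, whereas $x^n$ alone cuts out $\{0\}$. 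This gives $\sigma(A,U_\chi)=\mathrm{ord}(\chi)$.

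Finally, Lemma~\ref{lemma:sigma irred} yields
\[\sigma(A)=\max_{\chi\in\widehat{A}}\sigma(A,U_\chi)=\max_{\chi\in\widehat{A}}\mathrm{ord}(\chi)=\exp(\widehat{A})=\exp(A),\]
where the last equality uses the standard fact that a finite abelian group and its character group (over an algebraically closed field of characteristic prime to $|A|$) are (non-canonically) isomorphic, so they share the same exponent.

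There is no real obstacle: the only non-obvious ingredient is Lemma~\ref{lemma:sigma irred}, which is already available, and the identification of the irreducibles with characters is standard under the hypotheses on $\mathbb{F}$. The small subtlety worth writing out carefully is the claim $\sigma(A,U_\chi)=\mathrm{ord}(\chi)$, which one can verify either by the Nullstellensatz route above or, equivalently, by observing directly from the definition that $\mathbb{F}[x]$ is a finitely generated module over $\mathbb{F}[x^m]$ if and only if $n\mid m$.
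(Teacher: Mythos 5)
Your argument is correct and takes essentially the same route as the paper's own proof: reduce to irreducible modules via Lemma~\ref{lemma:sigma irred}, note that these are one-dimensional characters $\chi$ with ${\mathbb{F}}[x]^A={\mathbb{F}}[x^{\mathrm{ord}(\chi)}]$, so $\sigma(A,U_\chi)=\mathrm{ord}(\chi)$, and conclude by $A\cong\widehat{A}$. (Only your final parenthetical aside is misstated: ${\mathbb{F}}[x]$ is a finitely generated ${\mathbb{F}}[x^m]$-module for every $m\ge 1$; the correct elementary check is that $R={\mathbb{F}}[x^n]$ is finitely generated over ${\mathbb{F}}[R_{\le d}]$ iff $d\ge n$ --- but your Nullstellensatz route already settles this, so the slip is harmless.)
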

\begin{proof}
Lemma~\ref{lemma:sigma irred} asserts that $\sigma(A)=\max_U \sigma(A,U)$ 
where $U$ runs through the  irreducible representations of $A$. These are all $1$-dimensional, 
and if $U^*= \langle x \rangle$ then ${\mathbb{F}}[x]^A = {\mathbb{F}}[x^e]$ where  $e \in \mathbb{N}$ is the order of the character ${\mathrm{char}}i:A\to {\mathbb{F}}^\times$ defined 
by $x^a={\mathrm{char}}i(a)x$ ($a\in A$). This readily implies our claim, as $A\cong \hat A$, where $\hat A:=\hom_{\mathbb{Z}}(A,{\mathbb{F}}^\times)$ is the group of characters of $A$. 
\end{proof}

\begin{lemma} \label{lemma:sigma red1}
Let  $N$ be a normal subgroup of $G$ and  $V$ a $G$-module.
Then 
\begin{align*}
\sigma(G,V) &\le \sigma(G/N)\sigma(N,V).  
\end{align*}
\end{lemma}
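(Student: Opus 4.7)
The plan is to use the Hilbert Nullstellensatz characterization of $\sigma$ from Proposition~\ref{prop:hilbert zero} together with a substitution argument. Write $R = {\mathbb{F}}[V]^N$ and $S = {\mathbb{F}}[V]^G = R^{G/N}$, and set $b = \sigma(N,V)$, $c = \sigma(G/N)$. The goal is to produce homogeneous elements of $S_+$ of degree at most $bc$ whose common zero locus in $\bar{\mathbb{F}} \otimes V$ is the origin, as this will give $\sigma(G,V) \le bc$ via Proposition~\ref{prop:hilbert zero}.

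First I would choose, by Proposition~\ref{prop:hilbert zero} applied to $N$, homogeneous $N$-invariants $f_1,\dots,f_m \in R$ of degree at most $b$ whose common zero locus in $\bar{\mathbb{F}}\otimes V$ is $\{0\}$. Since $N$ is normal in $G$, the group $G$ acts on $R$ through $G/N$; let $W \subset R$ be the finite-dimensional graded $G/N$-submodule spanned by all $G/N$-translates of the $f_i$, and fix a homogeneous ${\mathbb{F}}$-basis $w_1,\dots,w_n$ of $W$ with $\deg_R(w_j) \le b$.

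Next I would apply the defining property of $c = \sigma(G/N)$ to the particular $G/N$-module $W$: there exist $(G/N)$-invariants $h_1,\dots,h_s \in {\mathbb{F}}[W]^{G/N}$ of degree at most $c$ (in the standard grading of ${\mathbb{F}}[W]$) whose common zero locus in $W$ is $\{0\}$. Let $X_1,\dots,X_n$ be the dual basis of $W^*$ and consider the algebra homomorphism $\phi: {\mathbb{F}}[W] \to R$ determined by $X_j \mapsto w_j$. A direct check shows $\phi$ is $G/N$-equivariant, so $\tilde h_i := \phi(h_i) \in R^{G/N} = S$; and since $\phi$ sends a standard-degree-$d$ monomial $X_{i_1}\cdots X_{i_d}$ to an element of $R$-degree $\sum \deg(w_{i_k}) \le db$, each $\tilde h_i$ has degree at most $bc$ in $S$.

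Finally I would verify that the common zero locus of $\tilde h_1,\dots,\tilde h_s$ on $V$ is $\{0\}$: for any $v \in \bar{\mathbb{F}} \otimes V$ the tuple $(w_1(v),\dots,w_n(v)) \in \bar{\mathbb{F}} \otimes W$ is a common zero of $h_1,\dots,h_s$ and hence equals $0$, so $w_j(v)=0$ for every $j$; since each $f_i$ is a linear combination of the $w_j$, it follows that $f_i(v)=0$ for all $i$, whence $v=0$ by the choice of the $f_i$. This produces the required invariants and proves the inequality. The main obstacle I anticipate is setting up the substitution map $\phi$ so that its $G/N$-equivariance is transparent with the contragredient conventions used in the paper; once that is in place, the argument is the standard fact that substituting an equivariant tuple into an invariant polynomial yields an invariant.
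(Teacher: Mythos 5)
Your overall route---produce $N$-invariants of degree at most $\sigma(N,V)$ cutting out the origin, let $W\subset{\mathbb{F}}[V]^N$ be the finite-dimensional graded $G/N$-stable space they span together with their translates, and then substitute into a system of invariants of degree at most $\sigma(G/N)$ attached to that space---is exactly the reduction the paper performs; the paper merely phrases the conclusion via module-finiteness and linear reductivity of $G/N$ instead of zero loci. The gap is the step you flagged yourself: the substitution map $\phi:{\mathbb{F}}[W]\to{\mathbb{F}}[V]^N$, $X_j\mapsto w_j$, sending the dual basis of $W^*$ to the chosen basis of $W$, is \emph{not} $G/N$-equivariant in general. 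On the degree-one level it is a basis-dependent linear map $W^*\to W$, and since under any consistent convention the dual basis transforms by the inverse-transpose matrices, its equivariance forces the matrices representing $G/N$ on $W$ in the basis $w_1,\dots,w_n$ to be orthogonal; already for a one-dimensional $W$ spanned by a relative invariant with character $\chi$ this forces $\chi^2=1$. Consequently the assertion that $\tilde h_i=\phi(h_i)$ lies in ${\mathbb{F}}[V]^G$---the whole point of the construction---is not justified as written (for diagonal abelian actions the images of invariant monomials happen to be invariant, which may be why the step looked harmless, but there is no such argument in general).

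The repair is precisely the dualization the paper carries out: apply Proposition~\ref{prop:hilbert zero} to the $G/N$-module $W^*$ instead of $W$. The degree-one component of ${\mathbb{F}}[W^*]$ is canonically $(W^*)^*\cong W$, so the algebra map $\pi:{\mathbb{F}}[W^*]\to{\mathbb{F}}[V]^N$ extending the inclusion $W\subset{\mathbb{F}}[V]^N$ is $G/N$-equivariant by construction, and since $\sigma(G/N,W^*)\le\sigma(G/N)$ the degree bound $\deg\pi(h_i)\le\sigma(N,V)\sigma(G/N)$ is unchanged; your zero-locus verification then goes through verbatim with the point $\sum_j w_j(v)X_j\in\bar{\mathbb{F}}\otimes W^*$ in place of your coordinate tuple. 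One further small point: the $\pi(h_i)$ need not be homogeneous because the $w_j$ have various degrees, so pass to their homogeneous components, which are still $G$-invariant of degree at most $\sigma(N,V)\sigma(G/N)$ and still cut out only the origin. With these corrections your Nullstellensatz finish is a sound alternative to the paper's ending (which instead uses finiteness of module generation and linear reductivity of $G/N$ to see that $\pi$ maps ${\mathbb{F}}[W^*]^{G/N}$ onto the needed invariants), and it even dispenses with that reductivity step since you only need the images to be invariant, not to exhaust the invariants.
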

\begin{proof} Set  $W:=\bigoplus_{d=1}^{\sigma(N,V)}{\mathbb{F}}[V]^N_d$ 
and denote by $S$ the subalgebra of ${\mathbb{F}}[V]^N$ generated by $W$. 
Then $S$ is a finite module over its finitely generated subalgebra $S^{G/N}=S^G$, and ${\mathbb{F}}[V]^N$ is a finite $S$-module, thus 
${\mathbb{F}}[V]^N$ is a finite, hence noetherian $S^G$-module, implying in turn that its submodule ${\mathbb{F}}[V]^G$ is also a finite $S^G$-module. 
Write $\pi$ for the ${\mathbb{F}}$-algebra surjection ${\mathbb{F}}[W^*]\to S$ induced by the natural isomorphism between the linear component $(W^*)^*$ of the polynomial ring 
${\mathbb{F}}[W^*]$ and $W\subset{\mathbb{F}}[V]^N$. By linear reductivity of $G/N$, $\pi$ maps ${\mathbb{F}}[W^*]^{G/N}$ onto $S^G$. Let $T$ be the ${\mathbb{F}}$-subalgebra of ${\mathbb{F}}[W^*]$ generated by 
$\bigoplus_{d=1}^{\sigma(G/N,W^*)}{\mathbb{F}}[W^*]^{G/N}_d$. Then ${\mathbb{F}}[W^*]^{G/N}$ is a finite $T$-module, implying that $S^G$ is a finite $\pi(T)$-module, and thus ${\mathbb{F}}[V]^G$ is a finite $\pi(T)$-module. Since by construction $\pi(T)$ is generated by elements of degree at most $\sigma(N,V)\sigma(G/N,W^*)$, the desired inequality follows. 
\end{proof}

\begin{lemma} \label{lemma:sigma red2}
Let $G$ be a finite group,  $H$  a subgroup of $G$, and $V$ a $G$-module. 
Then 
\begin{align*}
\sigma(H,V)  \le \sigma(G,V)  \le [G:H] \sigma(H,V). 
\end{align*}
\end{lemma}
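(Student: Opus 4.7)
The plan is to prove the two inequalities by exploiting the interpretation of $\sigma(G,V)$ given by Proposition~\ref{prop:hilbert zero}, as the minimal degree in which $G$-invariants suffice to cut out the origin scheme-theoretically.

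For the left inequality $\sigma(H,V)\le \sigma(G,V)$, I would simply observe that ${\mathbb{F}}[V]^G\subseteq {\mathbb{F}}[V]^H$. By Proposition~\ref{prop:hilbert zero} there exist homogeneous elements $f_1,\dots,f_r\in {\mathbb{F}}[V]^G_+$ of degree at most $\sigma(G,V)$ whose common zero locus in $\bar{\mathbb{F}}\otimes_{\mathbb{F}} V$ is $\{0\}$; but these are also elements of ${\mathbb{F}}[V]^H_+$, so applying Proposition~\ref{prop:hilbert zero} again gives $\sigma(H,V)\le \sigma(G,V)$.

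For the right inequality, I would use the classical relative norm construction. Let $m=[G:H]$ and fix coset representatives $g_1=e,g_2,\dots,g_m$ of $H$ in $G$. For any homogeneous $f\in {\mathbb{F}}[V]^H$, the key observation is that the set $\{f^{g_j}\mid 1\le j\le m\}$ is permuted by the right action of $G$ on ${\mathbb{F}}[V]$: indeed, for $g\in G$ we have $g_jg=h_jg_{\pi(j)}$ for some $h_j\in H$ and a permutation $\pi$ of $\{1,\dots,m\}$, whence $(f^{g_j})^g=f^{g_{\pi(j)}}$ by $H$-invariance of $f$. Consequently, the elementary symmetric polynomials $c_1(f),\dots,c_m(f)$ of $f^{g_1},\dots,f^{g_m}$ all lie in ${\mathbb{F}}[V]^G$, with $c_j(f)$ homogeneous of degree $j\deg(f)\le m\deg(f)$.

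Now I would choose, by Proposition~\ref{prop:hilbert zero}, homogeneous $f_1,\dots,f_r\in {\mathbb{F}}[V]^H_+$ of degree at most $\sigma(H,V)$ whose common zero locus is $\{0\}$, and consider the collection of $G$-invariants $\{c_j(f_i)\mid 1\le i\le r,\ 1\le j\le m\}$, all of degree at most $m\sigma(H,V)=[G:H]\sigma(H,V)$. The defining identity reads
\[ f_i^m = \sum_{j=1}^m(-1)^{j-1}c_j(f_i)\,f_i^{m-j}, \]
so at any common zero $v\in \bar{\mathbb{F}}\otimes_{\mathbb{F}} V$ of all the $c_j(f_i)$ we get $f_i(v)^m=0$, hence $f_i(v)=0$ for every $i$, forcing $v=0$. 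Applying Proposition~\ref{prop:hilbert zero} one final time yields $\sigma(G,V)\le [G:H]\sigma(H,V)$.

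I do not expect a serious obstacle here; the only point that requires a small check is the $G$-invariance of the coefficients $c_j(f_i)$, which hinges on the computation $(f^{g_j})^g=f^{g_{\pi(j)}}$ described above. One might be tempted to use only the single invariant $c_m(f_i)=\prod_j f_i^{g_j}$ (the relative norm), but this is insufficient, since $c_m(f_i)(v)=0$ merely produces an index $j_i$ with $f_i(g_{j_i}v)=0$, and the indices $j_i$ may depend on $i$; using the whole list of elementary symmetric functions circumvents this.
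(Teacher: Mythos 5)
Your proof is correct and follows essentially the same route as the paper: the paper's argument expands the identity $0=\prod_g(f_i-f_i^g)$ over coset representatives, which is exactly your elementary-symmetric-function relation $f_i^m=\sum_j(-1)^{j-1}c_j(f_i)f_i^{m-j}$, and then concludes via Proposition~\ref{prop:hilbert zero} just as you do. Your explicit check of the $G$-invariance of the $c_j(f_i)$ and the remark on why the norm alone does not suffice are accurate refinements of what the paper leaves implicit.
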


\begin{proof} The first inequality is trivial. By Proposition~\ref{prop:hilbert zero} there are homogeneous elements $f_1,\dots,f_r\in{\mathbb{F}}[V]^H$ of degree at most $\sigma(H,V)$ such that the common zero locus of $f_1,\dots,f_r$ in $\bar{\mathbb{F}}\otimes V$ is $\{0\}$. 
The formula $0=\prod_g(f_i-f_i^g)$ where $g$ ranges over a set of right $H$-coset representatives in $G$ shows that 
$f_i^{[G:H]}$ is contained in the ideal of ${\mathbb{F}}[V]$ generated by ${\mathbb{F}}[V]^G_+$. Since $\deg(f_i^{[G:H]})\le [G:H]\sigma(H,V)$, 
it follows that the common zero locus of $\bigoplus_{d=1}^{ [G:H]\sigma(H,V)}{\mathbb{F}}[V]^G_d$ is contained in the common zero locus $\{0\}$ of $\{f_i^{[G:H]}\mid i=1,\dots,r\}$. 
Consequently, again by Proposition~\ref{prop:hilbert zero}, ${\mathbb{F}}[V]^G$ is a finitely generated module over its subalgebra generated by 
the homogeneous components ${\mathbb{F}}[V]^G_d$ with $d\le [G:H]\sigma(H,V)$. 
\end{proof} 

\begin{remark}\label{remark:modularsigma} 
(i) The statement and proof of Lemma~\ref{lemma:sigma red1} remain valid under the weaker assumption that $[G:N]$ is not divisible by ${\mathrm{char}}({\mathbb{F}})$. 

(ii) The statement and proof of Lemma~\ref{lemma:sigma red2} remain valid in the modular case ${\mathrm{char}}({\mathbb{F}})\mid |G|$. 
When $H=\{1\}$ is the trivial subgroup, we obtain the inequality $\sigma(G,V)\le |G|$. 
\end{remark} 

Lemma~\ref{lemma:sigma red1} and Lemma~\ref{lemma:sigma red2} have the folowing immediate corollary: 
\begin{corollary} \label{cor:sigma subquotient}
For any subquotient $K$ of $G$ we have 
\[ \frac{\sigma(G)}{|G|} \le \frac{\sigma(K)}{|K|}\]
\end{corollary}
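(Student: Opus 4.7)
The plan is to write a subquotient $K$ of $G$ in the standard form $K = H/N$ with $H \le G$ and $N \triangleleft H$, and then chain the two preceding lemmas along the intermediate stage $H$. The goal is to show that after normalizing by the group order, $\sigma(\cdot)/|\cdot|$ is monotone both under the passage $G \rightsquigarrow H$ (via Lemma~\ref{lemma:sigma red2}) and under the passage $H \rightsquigarrow H/N$ (via Lemma~\ref{lemma:sigma red1}). Since both of these lemmas are already at our disposal, there is no serious obstacle; the only mildly non-trivial point is the invocation of the ambient bound $\sigma(N) \le |N|$ to convert the inequality involving $\sigma(N,V)$ into one involving the index $[H:N] = |K|$.

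For the first step, restricting any $G$-module $V$ to $H$ and applying Lemma~\ref{lemma:sigma red2} gives $\sigma(G,V) \le [G:H]\,\sigma(H,V) \le [G:H]\,\sigma(H)$. Taking the supremum over $G$-modules $V$ produces $\sigma(G) \le [G:H]\,\sigma(H)$, and dividing by $|G| = [G:H]\,|H|$ yields
\[\frac{\sigma(G)}{|G|} \le \frac{\sigma(H)}{|H|}.\]
For the second step, Lemma~\ref{lemma:sigma red1} applied to any $H$-module $V$ (with the normal subgroup $N$) gives $\sigma(H,V) \le \sigma(H/N)\,\sigma(N,V) \le \sigma(H/N)\,\sigma(N)$; combined with the bound $\sigma(N) \le |N|$ recalled after Proposition~\ref{prop:hilbert zero} (valid since ${\mathrm{char}}({\mathbb{F}}) \nmid |G|$ implies ${\mathrm{char}}({\mathbb{F}}) \nmid |N|$), the supremum over $V$ gives $\sigma(H) \le |N|\,\sigma(H/N)$. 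Dividing by $|H| = [H:N]\,|N| = |K|\,|N|$ then produces $\sigma(H)/|H| \le \sigma(K)/|K|$. Concatenating the two inequalities proves the claim.
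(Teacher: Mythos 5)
Your proof is correct and follows exactly the route the paper intends: the corollary is stated there as an immediate consequence of Lemma~\ref{lemma:sigma red1} and Lemma~\ref{lemma:sigma red2}, chained through the intermediate subgroup $H$ with $K=H/N$, together with the bound $\sigma(N)\le |N|$. Nothing is missing.
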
 

\begin{remark} Lemma~\ref{lemma:sigma red1} and Lemma~\ref{lemma:sigma red2} are the analogues for $\sigma$ of the reduction lemmata for $\beta$ in  \cite{schmid} mentioned at the beginning of Section~\ref{sec:growth}. For a variant of these statements concerning separating invariants see Section 3.9.4 in \cite{derksen-kemper}, 
\cite{kemper}, and \cite{kohls-kraft}. 
\end{remark} 

\section{$\sigma(G)$ for some semidirect products }

We need some facts and terminology relating zero-sum sequences over $A=Z_p$, the group of prime order $p$, written additively. See for example \cite{geroldinger-gao} 
as a general reference to this topic. 
Recall that by a \emph{sequence over} $A$ we mean a  sequence $S = (s_1, ..., s_d)$ of elements $s_i\in A$ where the order of the elements is disregarded and repetition is allowed.  
We say that $S$ is a \emph{zero-sum sequence} if $\sum_{i=1}^ds_i=0\in A$.  Denote by ${\mathrm{supp}}(S)$ the set of elements of $A$ that occur in $S$.

\begin{lemma}\label{lemma:tarto} 
For any non-empty subset $S \subseteq Z_p\setminus\{0\}$ there exists a 
zero-sum sequence $T$  of length at most $p$ with ${\mathrm{supp}}(T) = S$. 
\end{lemma}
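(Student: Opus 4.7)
The plan is to split into cases according to whether $S$ already sums to zero in $Z_p$, and in the non-trivial case to pad $S$ with additional copies of a single cleverly chosen element. Set $k := |S|$ and $s := \sum_{a \in S} a \in Z_p$. If $s = 0$, let $T$ be the sequence obtained by listing each element of $S$ exactly once; then $\mathrm{supp}(T) = S$, the sum of $T$ is zero, and the length of $T$ is $k \le p - 1 < p$, so we are done. I may therefore assume $s \neq 0$ in what follows.

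In this remaining case, for each $a \in S$ let $n_a \in \{0, 1, \ldots, p-1\}$ denote the unique representative of $-s a^{-1}$ modulo $p$. Since $s \neq 0$ and $a$ is invertible in $Z_p$, we have $n_a \neq 0$, so in fact $n_a \in \{1, \ldots, p-1\}$. Padding the length-$k$ sequence listing each element of $S$ exactly once by $n_a$ further copies of $a$ produces a sequence of length $k + n_a$, with support still equal to $S$, and with total sum $s + n_a \cdot a \equiv 0 \pmod{p}$. Thus it suffices to find some $a \in S$ with $n_a \le p - k$.

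This last step is a clean pigeonhole. The map $a \mapsto -s a^{-1}$ is a bijection of $Z_p \setminus \{0\}$ to itself, so $\{n_a : a \in S\}$ is a $k$-element subset of $\{1, \ldots, p-1\}$; likewise $\{1, 2, \ldots, p-k\}$ is a $(p-k)$-element subset of the same ambient set. Since $k + (p-k) = p$ exceeds $|\{1, \ldots, p-1\}| = p - 1$, these two subsets must intersect, and any $a \in S$ with $n_a$ lying in the intersection yields the desired sequence $T$. The only step requiring any genuine insight is the reduction to this two-set pigeonhole via padding with copies of a single element of $S$; everything else is mechanical.
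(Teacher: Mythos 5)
Your proof is correct and follows essentially the same route as the paper: handle the zero-sum case directly, and otherwise pad with $n_a$ extra copies of a single element $a\in S$ where $n_a a = -\sum_{b\in S} b$, using the distinctness of the $n_a$ (your two-set pigeonhole is just a rephrasing of ``the smallest of $k$ distinct values in $\{1,\dots,p-1\}$ is at most $p-k$''). No gaps.
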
 

\begin{proof} Let $s_1,\dots,s_k$ denote the elements of $S$. If $s_1+\dots+s_k=0$, then the sequence $T:=(s_1,\dots,s_k)$ satisfies the requirements. 
Otherwise for $i=1,\dots k$ denote $n_i$ the unique element in $\{1,\dots,p-1\}$ with $n_is_i=-(s_1+\dots+s_k)\in Z_p$. 
The $n_i$ are distinct, hence the smallest among them, say $n_1\le p-k$. 
Then the sequence $T:=(s_1,\dots,s_1,s_2,\dots,s_k)$ (where the multiplicity of $s_1$ is $n_1+1$) satisfies the requirements.  
\end{proof} 

\begin{proposition}\label{sigma ZpZq}
Let $G=Z_p\rtimes Z_d$ be a semidirect product of cyclic groups, where $p$ is a prime, $d$ is a divisor of $p-1$, and $Z_d$ acts faithfully (via conjugation) on $Z_p$. Then we have 
$\sigma(G)=p$.
\end{proposition}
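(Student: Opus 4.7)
The plan is to prove $\sigma(G)\ge p$ and $\sigma(G)\le p$ separately. For the lower bound, I would take $V:=\mathrm{Ind}_{Z_p}^G\chi$ for any nontrivial character $\chi$ of $Z_p$. Since $Z_d$ acts faithfully on $\hat{Z_p}\setminus\{0\}$ with orbits of size $d$, this induced module is a $d$-dimensional irreducible $G$-module whose restriction to $Z_p$ is a direct sum of $d$ nontrivial characters. Lemma~\ref{lemma:sigma irred} and Corollary~\ref{cor: sigma abelian} then give $\sigma(Z_p,V)=\exp(Z_p)=p$, and the first inequality of Lemma~\ref{lemma:sigma red2} with $H=Z_p$ yields $\sigma(G,V)\ge p$.

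For the upper bound, Lemma~\ref{lemma:sigma irred} reduces the task to showing $\sigma(G,V)\le p$ for every irreducible $V$. Since $[G,G]=Z_p$, every $1$-dimensional irreducible factors through $G/Z_p\cong Z_d$, giving $\sigma(G,V)\le\sigma(Z_d)=d<p$. The substantive case is a $d$-dimensional irreducible $V=\mathrm{Ind}_{Z_p}^G\chi$. Working over $\bar{{\mathbb{F}}}$, I would choose an eigenbasis $y_1,\ldots,y_d$ of $V^*$ for $Z_p$ with weights $\chi_1,\ldots,\chi_d$ forming a single $Z_d$-orbit in $\hat{Z_p}\setminus\{0\}$, rescaled so that a generator $\sigma_1$ of $Z_d$ acts by the cyclic shift $y_i\mapsto y_{i+1}$ (possible because the cocycle $c_i\in\bar{{\mathbb{F}}}^\times$ defined by $\sigma_1\cdot y_i=c_iy_{i+1}$ satisfies $\prod_ic_i=1$, as $\sigma_1^d=1$). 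A key combinatorial observation is that $\sum_{i=1}^d\chi_i=0$ in $\hat{Z_p}\cong Z_p$, being the sum of a geometric progression with nontrivial ratio.

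Given any $v\in V\setminus\{0\}$, the goal is a $G$-invariant of degree at most $p$ nonzero at $v$; by Proposition~\ref{prop:hilbert zero} this would yield $\sigma(G,V)\le p$. Set $S:=\{i\colon y_i(v)\ne 0\}$ and $H:=\mathrm{Stab}_{Z_d}(S)$. The heart of the construction is a $Z_p$-invariant monomial $y^{\vec a}$ with $\mathrm{supp}(\vec a)=S$, with $\vec a$ constant on each $H$-orbit in $S$, and of degree at most $p$. When $H\ne 1$, the $\chi_i$'s on any $H$-orbit already sum to zero (again a nontrivial geometric progression), so $\vec a=\mathbf{1}_S$ works with degree $|S|\le d<p$. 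When $H=1$, Lemma~\ref{lemma:tarto} applied to the nonempty set $\{\chi_i\colon i\in S\}\subseteq Z_p\setminus\{0\}$ furnishes a zero-sum sequence of length at most $p$ with exactly this support, giving the required $a_i$. The orbit sum $f:=\sum_{\sigma\in Z_d/\mathrm{Stab}(\vec a)}y^{\sigma\vec a}$ is then a $G$-invariant of degree at most $p$.

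The main obstacle is verifying $f(v)\ne 0$. The key identity is $\mathrm{Stab}(\vec a)=H$: the inclusion $\subseteq$ uses that $\mathrm{supp}(\vec a)=S$ exactly, since any stabilizer of $\vec a$ must stabilize $S$, while $\supseteq$ is the imposed $H$-invariance of $\vec a$. Hence among the coset representatives $\sigma\in Z_d/H$ only the identity satisfies $\sigma S\subseteq S$, so exactly one term survives evaluation at $v$: $f(v)=y^{\vec a}(v)=\prod_{i\in S}y_i(v)^{a_i}\ne 0$. The rescaling of the basis was necessary precisely to prevent scalar twists from appearing in the orbit sum, which could otherwise cause accidental cancellation.
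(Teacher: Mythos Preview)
Your argument is correct and follows essentially the paper's route: the lower bound via Lemma~\ref{lemma:sigma red2} and $\sigma(Z_p)=p$, the reduction to irreducibles via Lemma~\ref{lemma:sigma irred}, and for the faithful $d$-dimensional irreducible the use of Lemma~\ref{lemma:tarto} to build a $Z_p$-invariant monomial of degree at most $p$ with prescribed support, orbit-summed over $Z_d$, with non-vanishing checked by a support count. Your version is in fact more careful than the paper on the non-cancellation step: by rescaling so that $Z_d$ permutes the eigenbasis without scalars and by taking $\vec a=\mathbf 1_S$ whenever $H=\mathrm{Stab}_{Z_d}(S)\neq 1$ (using that each nontrivial $H$-orbit of characters sums to zero), you force $\mathrm{Stab}(\vec a)=H$ so that exactly one term of the orbit sum survives at $v$, whereas the paper's assertion ``$f_k(u)=c\cdot m_S(u)$'' tacitly requires $m_{S_{k,i}}$ to be fixed by $\mathrm{Stab}(S_{k,i})$, something Lemma~\ref{lemma:tarto} alone does not guarantee.
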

\begin{proof}
We know that $\sigma(G)\ge\sigma(Z_p)=p$ by Lemma~\ref{lemma:sigma red2} and Corollary~\ref{cor: sigma abelian}.
By Lemma~\ref{lemma:sigma irred}  it is enough to prove that $\sigma(G, U) \le p$, where $U$ is an irreducible $G$-module. 
Since $\sigma(G)$ is not sensitive for extending the base field, we may assume that ${\mathbb{F}}$ is algebraically closed. 
 Denote by $A$ the maximal normal subgroup $Z_p$ of $G$. 
 Then $G$ has only two types of irreducible representations: 
if $U$ is $1$-dimensional with $A$ in its kernel, then $\sigma(G, U) \le |G/A|  = |Z_d| \le  p-1$. 
Otherwise $U$ is induced from a non-trivial $1$-dimensional $A$-module. In this case we may choose variables 
$x_1,\dots,x_d$ in ${\mathbb{F}}[U]$ such that the $x_i$ are $A$-eigenvectors permuted up to scalar multiples by $G$, and 
denoting by $\theta_i\in\hat A:=\hom_{\mathbb{Z}}(A,{\mathbb{F}}^\times)$ the corresponding character of $A$ (i.e. $x_i^a=\theta_i(a)x$ for all $a\in A$), the set 
$O:=\{\theta_1,\dots,\theta_d\}$ is a $G/A$-orbit in $\hat A$ (on which $G$ acts in the standard way). 
For a monomial $m=x_1^{\alpha_1}\dots x_d^{\alpha_d}\in {\mathbb{F}}[U]$ denote by $\Phi(m)$ the sequence over $\hat A$ containing 
$\theta_i$ with multiplicity $\alpha_i$ for $i=1,\dots,d$, and no other elements. 
Obviously $m$ is $A$-invariant if and only if $\Phi(m)$ is a zero-sum sequence. 

For every $k \le |O|$ we choose representatives $S_{k,1},...,S_{k,r_k}$ from each $G/A$-orbit of the $k$-element subsets of $O$. 
By Lemma \ref{lemma:tarto} we can assign to each of them an $A$-invariant monomial $m_{S_{k,i}}\in{\mathbb{F}}[U]$ with support  
${\mathrm{supp}}(\Phi(m_{S_{k,i}}))=S_{k,i}$ and  degree at most $p$. 
Now consider the polynomials:
\begin{align*} \label{hsop ZpZq}
f_k = \sum_{i=1}^{r_k} \sum_{g\in G/A}m_{S_{k,i}}^g  \qquad \text{ for } k =1,...,|O| \end{align*}
They are all $G$-invariants, 
moreover, it is easily checked that their common zero locus is $\{ 0 \}$. 
Indeed, if the vector $u=(u_1,...,u_{|O|}) \in \bar{{\mathbb{F}}}\otimes U\cong \bar{{\mathbb{F}}}^d$ belongs to this common zero locus,
and if the set $S = \{i: u_i \neq 0 \}$ has cardinality $k > 0$ then $ 0= f_k(u)  = c\cdot m_S(u)$ for some divisor $c$ of $d$. It follows that $m_S(u)=0$,  
implying that $u_j = 0$ for an index $j \in S$, which is a contradiction. 
Consequently ${\mathbb{F}}[U]^G$ is finitely generated over ${\mathbb{F}}[f_1,...,f_{|O|}]$ by Proposition~\ref{prop:hilbert zero}, 
hence $\sigma(G,U) \le \max_k \deg(f_k) \le p$.
\end{proof}

\begin{proposition}\label{sigma ZpZ2}
Let $G = A \rtimes Z_2$ be  a semidirect product where $A$ is a non-trivial abelian group on which $Z_2$ acts by inversion. 
Then $\sigma(G) = \exp(A)$. 
\end{proposition}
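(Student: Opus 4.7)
My plan is to establish $\sigma(G) \leq \exp(A)$ and $\sigma(G) \geq \exp(A)$ separately. Since $\sigma(G,V)$ is unchanged under extension of the base field (the Nullstellensatz interpretation in Proposition~\ref{prop:hilbert zero} is geometric), I may assume ${\mathbb{F}}$ is algebraically closed throughout, and by Lemma~\ref{lemma:sigma irred} it suffices for the upper bound to show $\sigma(G,U)\leq \exp(A)$ for every irreducible $G$-module $U$.

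To handle the irreducibles I would apply Clifford theory with respect to the normal abelian subgroup $A$ of index~$2$. Because the generator $t$ of $Z_2$ acts on $\hat A$ via $\chi \mapsto \chi^{-1}$, every $Z_2$-orbit has one or two elements, and correspondingly each irreducible $G$-module is either $1$-dimensional (when $\chi = \chi^{-1}$, i.e.\ $\mathrm{ord}(\chi)\mid 2$) or $2$-dimensional with $U|_A \cong {\mathbb{F}}_\chi \oplus {\mathbb{F}}_{\chi^{-1}}$ for some $\chi$ of order $e\geq 3$. In the $1$-dimensional case, $t$-invariance of $\chi|_A$ forces $\chi(a)^2 = 1$ for all $a\in A$, so for a basis vector $x\in U^*$ one has $x^2 \in {\mathbb{F}}[U]^G$ and $\sigma(G,U)\leq 2 \leq \exp(A)$.

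For a $2$-dimensional irreducible $U$, choose $A$-eigenvectors $x_1,x_2\in U^*$ of eigencharacters $\chi,\chi^{-1}$; since $t$ conjugates $A$ by inversion, $x_1^{t}$ is a $\chi^{-1}$-eigenvector, hence proportional to $x_2$, and after rescaling we may assume $t$ swaps $x_1$ and $x_2$. Then both $x_1 x_2$ (of degree~$2$) and $x_1^{e} + x_2^{e}$ (of degree $e = \mathrm{ord}(\chi)$) are $G$-invariant, and their common zero locus is obviously $\{0\}$. By Proposition~\ref{prop:hilbert zero} this yields $\sigma(G,U)\leq \max(2,e)\leq \exp(A)$.

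For the lower bound, take $\chi\in\hat A$ of maximal order $\exp(A)$ and consider the induced $G$-module $W = \mathrm{Ind}_A^G({\mathbb{F}}_\chi)$, whose $A$-restriction is ${\mathbb{F}}_\chi\oplus{\mathbb{F}}_{\chi^{-1}}$. Lemma~\ref{lemma:sigma irred} together with Corollary~\ref{cor: sigma abelian} give $\sigma(A,W) = \mathrm{ord}(\chi) = \exp(A)$, and then the first inequality of Lemma~\ref{lemma:sigma red2} yields $\sigma(G)\geq \sigma(G,W)\geq \sigma(A,W) = \exp(A)$. The only point needing minor care is the degenerate case $\exp(A) = 2$, where inversion is trivial, $G = A\times Z_2$ is abelian, and the conclusion is immediate from Corollary~\ref{cor: sigma abelian}. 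The heart of the argument is therefore the explicit construction of the two $G$-invariants $x_1x_2$ and $x_1^e+x_2^e$ cutting out the origin in each $2$-dimensional irreducible; everything else is routine Clifford theory plus the reduction lemmas of Section~\ref{sec:sigmabasic}.
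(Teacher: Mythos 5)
Your proof is correct and follows essentially the same route as the paper: reduce to irreducibles over an algebraically closed field via Lemma~\ref{lemma:sigma irred}, dispose of the $1$-dimensional case by a degree-$2$ invariant, and in the $2$-dimensional case use exactly the invariants $x_1x_2$ and $x_1^e+x_2^e$, with the lower bound coming from $\sigma(G)\ge\sigma(A)=\exp(A)$ via Lemma~\ref{lemma:sigma red2}. Your explicit use of the induced module for the lower bound merely spells out what the paper leaves implicit, so no substantive difference.
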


\begin{proof}
By Lemma~\ref{lemma:sigma irred} we know that $\sigma(G) = \max \sigma(G,U)$ where $U$ is an irreducible $G$-module. 
As above, we may assume that ${\mathbb{F}}$ is algebraically closed. 
If $U$ is $1$-dimensional, then $\sigma(G,U)\le \sigma(G/G')$, where $G'$ is the commutator subgroup of $G$. 
It is easy to see that $G/G'$ is an elementary abelian $2$-group, whence $\sigma(G/G')=\exp(G/G')=2\le\exp(A)$. 
If the irreducible $G$-module $U$ is not $1$-dimensional, then ${\mathbb{F}}[U]_1 = U^* = \langle x,y\rangle$ where $x^a =\theta(a) x$ for any $a \in A$ and some character 
$\theta \in \hat{A}$, 
and $y^a = \theta(a)^{-1} y $; moreover $x$ and $y$ are exchanged by the generator $b$ of $Z_2$. 
Let $e$ denote the order of $\theta$ in $\hat{A}$; evidently $e \le \exp{A}$. Now it is easily seen that 
 ${\mathbb{F}}[U]^G \supseteq {\mathbb{F}}[x^e+ y^e, xy]$, whence $\sigma(G,U) \le \max\{e,2\}$. 
Thus  we proved the inequality $\sigma(G)\le\exp(A)$. For the reverse inequality note that $\sigma(G)\ge\sigma(A)$ by the first inequality in Lemma~\ref{lemma:sigma red2}. 
\end{proof}


\section{An improved general  bound on $\sigma(G)$}

In this section we  give an improvement for non-cyclic $G$ of  the general inequality $\sigma(G)\le |G|$.

\begin{theorem}\label{thm:mainsigma}
If $G$ is non-cyclic and $q$ is the smallest prime divisor of $|G|$, then
\[ \sigma(G) \le \frac{|G|}q \] 
\end{theorem}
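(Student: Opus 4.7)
The plan is to argue by induction on $|G|$. I record a standing observation: for any subquotient $K = H/N$ of $G$ (with $N \trianglelefteq H \le G$), the order $|K|$ divides $|G|$, so the smallest prime divisor $q_K$ of $|K|$ satisfies $q_K \ge q$. Also, by the base-field-extension insensitivity of $\sigma(G)$ noted in the proof of Proposition~\ref{sigma ZpZq}, I may assume ${\mathbb{F}}$ is algebraically closed.

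The abelian case is immediate. Writing $G \cong Z_{n_1} \times \cdots \times Z_{n_r}$ in invariant factor form with $n_1 \mid \cdots \mid n_r$ and $r \ge 2$ (as $G$ is non-cyclic), Corollary~\ref{cor: sigma abelian} gives $\sigma(G) = \exp(G) = n_r$, while $|G|/n_r = n_1 \cdots n_{r-1} \ge n_1 \ge q$ since $n_1 > 1$ divides $|G|$.

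For non-abelian $G$, my first move is a subquotient reduction. If $G$ admits a subquotient $K$ with $|K| < |G|$ that is non-cyclic, then the induction hypothesis yields $\sigma(K) \le |K|/q_K \le |K|/q$, and Corollary~\ref{cor:sigma subquotient} gives $\sigma(G)/|G| \le \sigma(K)/|K| \le 1/q$, whence $\sigma(G) \le |G|/q$. This handles every case except when $G$ is non-abelian and every proper subquotient is cyclic; equivalently, every proper subgroup of $G$ is cyclic (since subquotients of cyclic groups are cyclic), so $G$ must be a \emph{minimal non-cyclic group}.

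At this point I would invoke the classical Miller--Moreno classification: up to isomorphism the minimal non-cyclic groups are $Z_p \times Z_p$ (abelian, already covered), the quaternion group $Q_8$ (whose quotient $Q_8/Z(Q_8) \cong Z_2 \times Z_2$ is a non-cyclic proper subquotient, handled by the reduction step), and
\[ G = \langle a, b \mid a^p = b^{q^n} = 1,\ bab^{-1} = a^r \rangle, \]
where $p, q$ are distinct primes, $r \not\equiv 1 \pmod p$ and $r^q \equiv 1 \pmod p$. Since the multiplicative order of $r$ modulo $p$ is $q$, we have $q \mid p-1$, so $q < p$ and $q$ is indeed the smallest prime divisor of $|G| = p q^n$. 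A direct check identifies $Z(G) = \langle b^q \rangle \cong Z_{q^{n-1}}$ and $G/Z(G) \cong Z_p \rtimes Z_q$ with faithful action. For $n = 1$, Proposition~\ref{sigma ZpZq} gives $\sigma(G) = p = |G|/q$ directly; for $n \ge 2$, combining Lemma~\ref{lemma:sigma red1}, Proposition~\ref{sigma ZpZq}, and Corollary~\ref{cor: sigma abelian},
\[ \sigma(G) \le \sigma(G/Z(G))\,\sigma(Z(G)) = p \cdot q^{n-1} = |G|/q. \]
The main obstacle is precisely this last Miller--Moreno family: every proper subquotient is cyclic so the naive inductive reduction fails, and the sharp bound can only be extracted by identifying $Z(G)$ explicitly and applying the nontrivial computation $\sigma(Z_p \rtimes Z_q) = p$ from Proposition~\ref{sigma ZpZq}.
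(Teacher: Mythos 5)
Your proof is correct, but it reaches the bound by a different group-theoretic route than the paper, while sharing the same invariant-theoretic core: the subquotient monotonicity of Corollary~\ref{cor:sigma subquotient}, the formula $\sigma(A)=\exp(A)$ of Corollary~\ref{cor: sigma abelian}, and the computation $\sigma(Z_p\rtimes Z_q)=p$ of Proposition~\ref{sigma ZpZq}. The paper splits according to whether $G$ has a subquotient isomorphic to $Z_p\times Z_p$: a non-cyclic Sylow subgroup yields one via its Frattini quotient, and if all Sylow subgroups are cyclic, Burnside's theorem makes $G$ a semidirect product of cyclic groups, which (being non-cyclic) has a non-abelian subquotient $Z_p\rtimes Z_{q'}$ with $q'$ prime dividing $p-1$. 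You instead induct on $|G|$, reduce to the case where every proper subquotient is cyclic, and quote the classification of minimal non-cyclic groups, handling the metacyclic family $Z_p\rtimes Z_{q^n}$ by identifying $Z(G)=\langle b^q\rangle$ and applying Lemma~\ref{lemma:sigma red1} together with Proposition~\ref{sigma ZpZq}; all the estimates you carry out ($Z_p\times Z_p$, $Q_8$, $n=1$, $n\ge 2$) are valid. Your route buys a clean, self-contained case analysis at the cost of invoking a classification theorem; the paper's route avoids the classification but leans on Burnside's theorem about groups with all Sylow subgroups cyclic. Two cosmetic slips, neither affecting correctness: ``every proper subquotient is cyclic'' is not \emph{equivalent} to ``every proper subgroup is cyclic'' ($Q_8$ itself witnesses the failure of the converse), though you only use the valid implication and you enlarge, rather than shrink, the set of remaining cases; and for $n\ge 2$ the group $Z_p\rtimes Z_{q^n}$ does possess the non-cyclic proper quotient $G/Z(G)\cong Z_p\rtimes Z_q$, so your inductive reduction would already have disposed of it --- the only genuine base case beyond the abelian one is $Z_p\rtimes Z_q$ itself, which is precisely the subquotient the paper's argument targets via Proposition~\ref{sigma ZpZq}.
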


\begin{proof} 
If $G$ has a subquotient isomorphic to $Z_p \times Z_p$ for some prime $p$ 
then by Corollary~\ref{cor:sigma subquotient} and Corollary~\ref{cor: sigma abelian} we get that:
\[ \frac{\sigma(G)}{|G|} \le \frac{\sigma(Z_p \times Z_p )}{p^2} = \frac{1}{p} \]
and we are done. 
Note that for a non-cyclic $p$-group $P$, the factor group $P/\Phi(P)$ (where $\Phi(P)$ is the Frattini subgroup of $P$) contains a subgroup isomorphic 
to $Z_p\times Z_p$. So it remains to deal with the case when all Sylow subgroups of $G$ are cyclic. 
Then by a well known theorem of Burnside $G$ is the semidirect product of cyclic groups, and necessarily contains as a subquotient a 
non-abelian semi-direct product $Z_p\rtimes Z_q$, where $p,q$ are  primes, $q$ dividing $p-1$ (see for example \cite{cziszter-domokos:1} for references and details). 
Therefore by Corollary~\ref{cor:sigma subquotient} and  Proposition~\ref{sigma ZpZq}   we conclude  
$\sigma(G)/|G|\le \sigma(Z_p\rtimes Z_q)/pq=1/q$. 
\end{proof}

\begin{remark} Theorem~\ref{thm:mainsigma} is sharp for example for the abelian group $Z_{nq}\times Z_q$ or for a non-abelian semi-direct product 
$Z_n\rtimes Z_q$. 
\end{remark}  

Theorem~\ref{thm:mainsigma} is a variant for $\sigma(G)$ of the main combined result of \cite{cziszter-domokos:1} and \cite{cziszter-domokos:2} concerning $\beta(G)$.  
The present result for $\sigma(G)$ is  easier, but the conclusion is stronger. 
We finish by stating a  corresponding conjectured statement for the Noether number: 

\begin{conjecture} 
Let $\mathcal{C}_q$ denote the set of isomorphism classes of non-cyclic finite groups of order not divisible by  ${\mathrm{char}}({\mathbb{F}})$
and with smallest prime divisor  $q$. Then 
\[ \limsup_{G \in \mathcal{C}_q}  \frac{\beta(G)}{|G|} = \frac{1}{q} \]
\end{conjecture}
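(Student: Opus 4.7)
The plan is to prove the two inequalities $\limsup \ge 1/q$ and $\limsup \le 1/q$ separately. The lower bound is relatively easy. Consider the family of rank-$2$ abelian groups $G_n := Z_q \oplus Z_{nq}$, each of which belongs to $\mathcal{C}_q$. By Schmid's theorem $\beta(G_n) = \mathsf{D}(G_n)$, and since $q \mid nq$ the Davenport constant equals $q + nq - 1$. Hence
\[ \frac{\beta(G_n)}{|G_n|} = \frac{nq + q - 1}{nq^2} = \frac{1}{q} + \frac{q-1}{nq^2} \xrightarrow{n\to\infty} \frac{1}{q}, \]
which yields $\limsup \ge 1/q$.

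The upper bound requires showing that for every $\varepsilon > 0$, only finitely many $G \in \mathcal{C}_q$ satisfy $\beta(G)/|G| > 1/q + \varepsilon$. The strategy combines three tools from the paper: the reduction $\beta(G)\le \beta_{[G:H]}(H)$ valid for any subgroup $H\le G$ (recalled at the start of Section~\ref{sec:growth}); the asymptotic formula $\beta_k(H) = k\sigma(H) + \beta_0(H)$ for all $k \ge k_0(H)$ (Corollary~\ref{cor:growthbetag}); and the bound $\sigma(H)\le |H|/q$ valid as soon as $H$ is non-cyclic (Theorem~\ref{thm:mainsigma}), since the smallest prime divisor of $|H|$ divides $|G|$ and is therefore at least $q$. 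When these three inputs apply simultaneously to a non-cyclic $H\le G$ with $[G:H]\ge k_0(H)$, they combine to give
\[ \beta(G) \;\le\; [G:H]\,\sigma(H) + \beta_0(H) \;\le\; \frac{|G|}{q} + \beta_0(H), \]
whence $\beta(G)/|G| \le 1/q + \beta_0(H)/|G|$.

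To convert this inequality into a proof of the upper bound, one needs, for each sufficiently large $G\in\mathcal{C}_q$, to produce a non-cyclic witness subgroup $H$ for which both $[G:H] \ge k_0(H)$ and $\beta_0(H)/|G|$ becomes arbitrarily small. The structural dichotomy exploited in the proof of Theorem~\ref{thm:mainsigma} suggests the natural candidates: either $G$ admits a subquotient $Z_p\oplus Z_p$ for some prime $p\ge q$, or all Sylow subgroups of $G$ are cyclic and $G$ admits a subquotient $Z_p\rtimes Z_{q'}$ with $q'\mid p-1$ and $q'\ge q$. Where only a normal subquotient is available, one would substitute the complementary reduction $\beta(G)\le \beta_{\beta(G/N)}(N)$ and use $\beta(G/N)=|G/N|$ when $G/N$ is cyclic to supply an equally large ``$k$''.

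The main obstacle, I expect, is uniformity, and it splits into two parts. First, the witnesses produced by the proof of Theorem~\ref{thm:mainsigma} are subquotients rather than subgroups, and in the metacyclic case $Z_n\rtimes Z_m$ with both orders growing it is not evident that a non-cyclic subgroup of bounded order, or a bounded-order subquotient lying in a convenient normal series, always exists; a finer structural analysis of metacyclic groups, together with a direct computation of $\beta$ for them, seems unavoidable here. Second, both the threshold $k_0(H)$ and the constant term $\beta_0(H)$ in Corollary~\ref{cor:growthbetag} are only asserted to exist, not computed, so controlling the error $\beta_0(H)/|G|$ uniformly as $H$ varies over a possibly infinite family of witnesses will likely demand an effective version of that corollary for the small groups $Z_p\oplus Z_p$ and $Z_p\rtimes Z_{q'}$. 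I anticipate that this quantitative refinement, rather than any single structural reduction, is the real heart of the problem.
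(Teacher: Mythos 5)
This statement is a \emph{conjecture} in the paper: the authors give no proof, and they explicitly record that the case $q=2$ follows from \cite{cziszter-domokos:1} and \cite{cziszter-domokos:2}, while for $q>2$ the problem is open. So there is no proof of the paper to compare yours with; the only question is whether your attempt settles the conjecture, and it does not, as you in fact half-concede. Your lower bound is correct and is the easy half: with $G_n=Z_q\oplus Z_{nq}$ (restrict the parameter so that no prime smaller than $q$ divides the order, e.g. take $n$ a power of $q$, and so that ${\mathrm{char}}({\mathbb{F}})$ does not divide it), Schmid's theorem gives $\beta(G_n)={\mathsf{D}}(G_n)=nq+q-1$, and the ratio tends to $1/q$, so $\limsup\ge 1/q$.

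The upper bound is the entire content of the conjecture, and the obstacles you list are genuine, not removable by a ``finer analysis'' inside the paper's toolkit. First, the reduction $\beta(G)\le\beta_{[G:H]}(H)$ requires a subgroup, whereas Theorem~\ref{thm:mainsigma} only supplies subquotients; for the critical groups with all Sylow subgroups cyclic (already $G=Z_p\rtimes Z_q$ with $p\to\infty$) there is no proper non-cyclic subgroup at all, so the only admissible witness is $H=G$ itself, where $[G:H]=1$ lies below $k_0(H)$ and the inequality says nothing. Your fallback via a normal subgroup, $\beta(G)\le\beta_{\beta(G/N)}(N)$, does not help either: there the generalized Noether number is iterated on $N$, which in these examples is cyclic, and $\beta_q(Z_p)=qp=|G|$ recovers no saving of a factor $q$. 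Second, Corollary~\ref{cor:growthbetag} asserts only the existence of $k_0(H)$ and $\beta_0(H)$; since your witnesses ($Z_p\times Z_p$ or $Z_p\rtimes Z_{q'}$) necessarily range over an infinite family as $G$ varies, you would need uniform effective bounds on these constants, which neither the paper nor your argument provides---producing such bounds is essentially equivalent to the open problem itself. Third, Corollary~\ref{cor:growthbetag} is proved only in characteristic zero, while the conjecture concerns every non-modular characteristic. In short, your sketch reproduces the heuristic motivating the conjecture (that $\sigma$, controlled by Theorem~\ref{thm:mainsigma}, should govern the asymptotics of $\beta$), but the ``quantitative refinement'' you defer at the end is precisely the unsolved part.
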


The case $q=2$ holds by \cite{cziszter-domokos:1} and \cite{cziszter-domokos:2}. For $q>2$ the conjecture is open. 

\begin{center} {\bf Acknowledgement}\end{center} 

We thank the referees for helpful comments on the manuscript. 


\end{document}